
\documentclass{amsart}

\usepackage{amssymb}
\usepackage{graphicx} 
\usepackage{mathrsfs}
\usepackage{enumerate}
\usepackage{xspace}
\usepackage{color}
\usepackage{enumerate}
\usepackage{enumitem}
\usepackage{amsthm}
\usepackage{dsfont}
\usepackage{bbm}
\usepackage[colorlinks=true, linkcolor = blue, citecolor = blue]{hyperref}
\usepackage[numbers]{natbib}
\usepackage[backgroundcolor=white,bordercolor=red]{todonotes}
\DeclareMathAlphabet{\mathpzc}{OT1}{pzc}{m}{it}
\usepackage[nameinlink]{cleveref}
\numberwithin{equation}{section}
\begin{document}

\theoremstyle{plain}

\newtheorem{theorem}{Theorem}[section]
\newtheorem{lemma}[theorem]{Lemma}
\newtheorem{example}[theorem]{Example}
\newtheorem{proposition}[theorem]{Proposition}
\newtheorem{corollary}[theorem]{Corollary}
\newtheorem{definition}[theorem]{Definition}
\newtheorem{Ass}[theorem]{Assumption}
\newtheorem{condition}[theorem]{Condition}
\theoremstyle{definition}
\newtheorem{remark}[theorem]{Remark}
\newtheorem{SA}[theorem]{Standing Assumption}

\newcommand{\of}{[\hspace{-0.06cm}[}
\newcommand{\gs}{]\hspace{-0.06cm}]}

\newcommand\llambda{{\mathchoice
		{\lambda\mkern-4.5mu{\raisebox{.4ex}{\scriptsize$\backslash$}}}
		{\lambda\mkern-4.83mu{\raisebox{.4ex}{\scriptsize$\backslash$}}}
		{\lambda\mkern-4.5mu{\raisebox{.2ex}{\footnotesize$\scriptscriptstyle\backslash$}}}
		{\lambda\mkern-5.0mu{\raisebox{.2ex}{\tiny$\scriptscriptstyle\backslash$}}}}}

\newcommand{\1}{\mathds{1}}

\newcommand{\F}{\mathbf{F}}
\newcommand{\G}{\mathbf{G}}

\newcommand{\B}{\mathbf{B}}

\newcommand{\M}{\mathcal{M}}

\newcommand{\la}{\langle}
\newcommand{\ra}{\rangle}

\newcommand{\lle}{\langle\hspace{-0.085cm}\langle}
\newcommand{\rre}{\rangle\hspace{-0.085cm}\rangle}
\newcommand{\blle}{\Big\langle\hspace{-0.155cm}\Big\langle}
\newcommand{\brre}{\Big\rangle\hspace{-0.155cm}\Big\rangle}

\newcommand{\X}{\mathsf{X}}

\newcommand{\tr}{\operatorname{tr}}
\newcommand{\N}{{\mathbb{N}}}
\newcommand{\cadlag}{c\`adl\`ag }
\newcommand{\on}{\operatorname}
\newcommand{\oP}{\overline{P}}
\newcommand{\oO}{\mathcal{O}}
\newcommand{\D}{D(\mathbb{R}_+; \mathbb{R})}
\newcommand{\dd}{\mathsf{d}}

\renewcommand{\epsilon}{\varepsilon}

\newcommand{\fPs}{\mathfrak{P}_{\textup{sem}}}
\newcommand{\fPas}{\mathfrak{P}^{\textup{ac}}_{\textup{sem}}}
\newcommand{\rrarrow}{\twoheadrightarrow}
\newcommand{\cA}{\mathcal{C}}
\newcommand{\cR}{\mathcal{R}}
\newcommand{\cK}{\mathcal{K}}
\newcommand{\cQ}{\mathcal{Q}}
\newcommand{\cF}{\mathcal{F}}
\newcommand{\cC}{\mathcal{C}}
\newcommand{\cD}{\mathcal{D}}
\newcommand{\bC}{\mathbb{C}}
\newcommand{\bth}{\overset{\leftarrow}\theta}
\renewcommand{\th}{\theta}

\newcommand{\bR}{\mathbb{R}}
\newcommand{\nnabla}{\nabla}
\newcommand{\f}{\mathfrak{f}}
\newcommand{\g}{\mathfrak{g}}
\newcommand{\oconv}{\overline{\on{co}}\hspace{0.075cm}}
\renewcommand{\a}{\mathfrak{a}}
\renewcommand{\b}{\mathfrak{b}}
\renewcommand{\d}{d}
\newcommand{\bS}{\mathbb{S}^\d_+}
\newcommand{\p}{\dot{\partial}}
\newcommand{\dr}{r} 
\newcommand{\m}{\mathbb{M}}
\newcommand{\Q}{Q}
\newcommand{\ob}{\widehat{b}}
\newcommand{\osigma}{\widehat{\sigma}}

\renewcommand{\emptyset}{\varnothing}

\allowdisplaybreaks

\makeatletter
\@namedef{subjclassname@2020}{%
	\textup{2020} Mathematics Subject Classification}
\makeatother

 \title[Convergence of Viscosity Solutions to HJB PPDEs]{A convergence theorem for Crandall--Lions viscosity solutions to path-dependent Hamilton--Jacobi--Bellman PDEs}
\author[D. Criens]{David Criens}
\address{Albert-Ludwigs University of Freiburg, Ernst-Zermelo-Str. 1, 79104 Freiburg, Germany}
\email{david.criens@stochastik.uni-freiburg.de}

\keywords{
	convergence theorem; limit theorem; stability; viscosity solution; path-dependent PDE; Hamilton--Jacobi--Bellman PDE; sublinear expectation; nonlinear expectation; Knightian uncertainty; \(G\)-expectation}

\subjclass[2020]
{35D40, 35Q91, 35Q93, 60G65}

\thanks{
}
\date{\today}

\maketitle

\begin{abstract}
	We establish a convergence theorem for Crandall--Lions viscosity solutions to path-dependent Hamilton--Jacobi--Bellman PDEs. Our proof is based on a novel convergence theorem for dynamic sublinear expectations and the stochastic representation of viscosity solutions as value functions.
\end{abstract}
\section{Introduction}
Path-dependent partial differential equations (PPDEs) (that are PDEs in the space of continuous functions) are currently a very active field of research. 
Typically, they do not have smooth solutions, which motivates the study of weaker solution concepts, such as {\em viscosity solutions}, which are frequently used in classical finite dimensional frameworks. 

In the context of viscosity solutions to PPDEs, many different concepts have been introduced in the past years. We refer to the introductions of \cite{cosso,cosso2,zhou} for detailed comments on the literature. 
One of the most recent streams investigates viscosity solutions in the classical sense of Crandall and Lions (see, for instance, their seminal papers \cite{CL1,CL2}). We are only aware of the articles \cite{cosso,cosso2,zhou} on this framework, where, under different assumptions, existence and uniqueness results and stochastic representations are established. Further, the paper \cite{zhou} provides a stability result under uniform convergence assumptions.

In this paper, we continue this line of research in form of a limit theorem that shows that compact convergence of coefficients and initial functions propagates to the corresponding viscosity solutions. To provide a more precise statement, let \(v^n \colon [0, T] \times C([0, T]; \bR^d)\to  \bR\) be the (in some sense) unique bounded Crandall--Lions viscosity solution to the nonlinear PPDE
\begin{equation} \label{eq: PPDE intro}
	\begin{cases}   
		\p v^n (t, \omega) + G^n (t, \omega, v^n) = 0, & \text{for } (t, \omega) \in [0, T) \times C([0, T]; \bR^d), \\
		v^n (T, \omega) = \psi^n (\omega), & \text{for } \omega \in C([0, T]; \bR^d),
	\end{cases}
\end{equation}
where \(\psi^n \colon C ([0, T]; \bR^d) \to \mathbb{R}\) is bounded and continuous,
\begin{align*}
	G^n(t, \omega, \phi) := \sup \Big\{ \langle \nabla \phi (t&, \omega), b^n (f, t, \omega) \rangle
	\\&+ \tfrac{1}{2} \on{tr} \big[ \nabla^2\phi (t, \omega) \sigma^n (\sigma^n)^* (f, t, \omega) \big]\colon f \in F \Big\},
\end{align*}
and \(b^n \colon F \times [0, T] \times C ([0, T]; \bR^d)\to \mathbb{R}^d\) and \(\sigma^n \colon F  \times [0, T] \times C ([0, T]; \bR^d) \to \mathbb{R}^{d \times r}\) are sufficiently regular coefficients. Here, the action space \(F\) is compact and metrizable.
In our main Theorem~\ref{thm: stability}, we show that compact convergence of the input data, i.e., \(b^n \to b^0, \sigma^n \to \sigma^0, \psi^n \to \psi^0\) uniformly on compacts, implies compact convergence of the corresponding viscosity solutions, i.e., \(v^n \to v^0\) uniformly on compacts.

Our limit theorem can be compared to results in classical (that is time and point-dependent) frameworks, whose proofs typically rely either on stability, uniqueness and the Arzel\`a--Ascoli theorem or stability and a comparison result for discontinuous viscosity solutions, see \cite{barles} or \cite[Chapter~VII]{FS}.  It is worth mentioning that neither suitable stability results nor comparison results for discontinuous solutions seem to be available for our path-dependent framework. 

We propose a novel approach to show convergence, which relies on the stochastic representation of the solutions as dynamic sublinear expectations that was established in \cite{CN23a}. To explain the idea, let us recall this representation.
Define
\begin{align*}
	\cC^n(t,\omega) := \Big\{ P \in \mathfrak{P}_{\text{sem}}^{\text{ac}}(t)\colon &P(X = \omega \text{ on } [0, t]) = 1, 
	(\llambda \otimes P)\text{-a.e. } \\&\qquad (dB^P_{\cdot + t} /d\llambda, dC^P_{\cdot + t}/d\llambda) \in \Theta^n (\cdot + t, X) \Big\},
\end{align*}
where \(\fPas(t)\) denotes the set of semimartingale laws \(P\) after time \(t\) with absolutely continuous characteristics \((B^P_{\cdot + t}, C^P_{\cdot + t})\), \(X\) denotes the coordinate process, \(\llambda\) denotes the Lebesgue measure and
\[
\Theta^n (t, \omega) := \big\{(b^n ( f, t, \omega), \sigma^n (\sigma^n)^* (f, t, \omega)) \colon f \in F \big\}.
\]
It was proved in \cite{CN23a} that
\[
v^n (t, \omega) = \sup_{P \in \cA^n (t, \omega)} E^P \big[ \psi^n \big].
\]
By defining \(K := \{1/n \colon n \in \mathbb{N}\} \cup \{0\}\) and
\[
v (1/n, t, \omega) := \sup_{P \in \cA^n (t, \omega)} E^P \big[ \psi^n \big], \qquad v (0, t, \omega) := \sup_{P \in \cA^0 (t, \omega)} E^P \big[ \psi^0 \big],
\]
the sequence \((v^n)_{n = 0}^\infty\) translates to a function \(v \colon K \times [0, T] \times C([0, T]; \bR^d)\to \bR\) and the convergence \(v^n \to v^0\) on compacts follows from joint continuity of
\(
(k, t, \omega) \mapsto v(k, t, \omega).
\)
In other words, our main theorem can be deduced from a suitable continuity result for dynamic sublinear expectations. 

We establish such a result in a more abstract framework. Consider a first countable space \(K\), two coefficients \(b \colon K \times F \times \bR_+ \times C(\bR_+;\bR^\d) \to \bR^\d\) and \(\sigma \colon K \times F \times \bR_+ \times C(\bR_+;\bR^\d) \to \mathbb{R}^{d \times r}\) and set
\begin{align*}
	\cC(k, t,\omega) := \Big\{ P \in \mathfrak{P}_{\text{sem}}^{\text{ac}}(t)\colon &P(X = \omega \text{ on } [0, t]) = 1, 
	(\llambda \otimes P)\text{-a.e. } \\&\qquad (dB^P_{\cdot + t} /d\llambda, dC^P_{\cdot + t}/d\llambda) \in \Theta (k, \cdot + t, X) \Big\},
\end{align*}
with
\[
\Theta (k, t, \omega) := \big\{(b (k, f, t, \omega), \sigma \sigma^* (k, f, t, \omega)) \colon f \in F \big\}. 
\]
Under suitable assumptions on \(b\) and \(\sigma\), we prove joint continuity of the {\em extended value function} 
\[
(k, t, \omega) \mapsto v (k, t, \omega) := \sup_{P \in \cA (k, t, \omega)} E^P \big[ \psi (k, t, X) \big],
\]
where \(\psi \colon K \times\bR_+ \times C(\bR_+; \bR^d)\to \bR\) is bounded and continuous. 
As explained above, this continuity property implies our limit theorem for viscosity solutions.

The proof of this regularity result relies on Berge's maximum theorem. A similar approach was used in \cite{CN23a} for the \(k\)-independent case. In fact, upper hemicontinuity follows as in \cite{CN23a}. In this article, we focus on the proof for lower hemicontinuity that requires more sophisticated arguments to account for the \(k\)-dependence of the extended value function. At this stage, we adapt ideas from the paper \cite{jacod1981weak} on convergence of stochastic differential equations driven by general semimartingales.

The continuity of the extended value function is of interest in its own. It provides a convergence result for dynamic sublinear expectations such as random \(G\)-expectations as introduced in \cite{nutzRGE,NVH}. Latter are extensions of Peng's \(G\)-expectation (\cite{peng2007g,peng2008multi,peng2010}), which is an important model for Knightian uncertainty in financial mathematics.

The paper is structured as follows. In Section \ref{sec: vis} we formulate our main convergence theorem. Its proof is given in Section \ref{sec: EVF and pf stability}. The stochastic setting and the continuity of the extended value function can be found in Section~\ref{sec: setting}.

\section{A convergence result for nonlinear path-dependent PDEs} \label{sec: vis}
In this section we present the main result of this paper, namely a convergence theorem for Hamilton--Jacobi--Bellman PPDEs. 

Let \(\d, r\in \mathbb{N}\) be fixed dimensions and let \(T > 0\) be a finite time horizon. We define \(C ([0, T]; \bR^d)\) to be the space of all continuous functions from \([0, T]\) into \(\mathbb{R}^d\) endowed with the uniform topology.
Let \(F\) be a topological space and, for \(n \in \mathbb{Z}_+\), let \(b^n \colon F \times [0, T] \times C([0, T]; \bR^d) \to \mathbb{R}^d\) and \(\sigma^n \colon F \times [0, T] \times C([0, T]; \bR^d) \to \mathbb{R}^{d \times r}\) be Borel functions such that, for every \(f \in F\), \(b^n (f, t, \omega)\) depends on \((t, \omega)\) only through \((\omega (s))_{s < t}\). 
The set \[C^{1, 2}_{pol} ([0, T] \times C([0, T]; \bR^d); \mathbb{R}) =: \mathbb{C}^{1, 2}_{pol} ([0, T])\]
of test functions is defined in Appendix \ref{app: test}.
For every \(n \in \mathbb{N}\) and \(( t, \omega,\phi) \in [0, T] \times C([0, T]; \bR^d) \times\hspace{0.05cm} \mathbb{C}^{1, 2}_{pol} ([0, T]) \), we introduce the nonlinear operator
\begin{equation} \label{eq: G^n}
	\begin{split}
		G^n(t, \omega, \phi) := \sup \Big\{ \langle \nabla& \phi (t, \omega), b^n (f, t, \omega) \rangle
		\\&+ \tfrac{1}{2} \on{tr} \big[ \nabla^2\phi (t, \omega) \sigma^n (\sigma^n)^* (f, t, \omega) \big]\colon f \in F \Big\},
	\end{split}
\end{equation}
and the corresponding nonlinear PPDE
\begin{equation} \label{eq: PIDE}
	\begin{cases}   
		\p v^n (t, \omega) + G^n (t, \omega, v^n) = 0, & \text{for } (t, \omega) \in [0, T) \times C([0, T]; \bR^d), \\
		v^n (T, \omega) = \psi^n (\omega), & \text{for } \omega \in C([0, T]; \bR^d),
	\end{cases}
\end{equation}
where \(\psi^n \colon C([0, T]; \bR^d) \to \mathbb{R}\) is a bounded continuous function.

\begin{definition}[Viscosity Solution]
	A function \(u^n \colon [0, T] \times C([0, T]; \bR^d) \to \mathbb{R}\) is said to be a \emph{viscosity subsolution} to \eqref{eq: PIDE} if it is upper semicontinuous and the following two properties hold:
	\begin{enumerate}
		\item[\textup{(a)}] \(u^n(T, \cdot) \leq \psi^n\);
		\item[\textup{(b)}]
		for any \((t,\omega) \in [0, T) \times C([0, T]; \bR^d) \) and \( \phi \in \mathbb{C}^{1, 2}_{pol} ([0, T])\) satisfying
		\begin{align*}
			0 &= (u^n - \phi)(t,\omega) 
			\\&= \sup \big\{ (u^n - \phi)(s,\omega') \colon (s,\omega') \in [t, T] \times C([0, T]; \bR^d)  \big\},
		\end{align*}
		we have
		\(
		\p \phi (t, \omega) + G^n (t, \omega, \phi) \geq 0.
		\)
	\end{enumerate}
	Moreover, a function \(u^n \colon [0, T] \times C([0, T]; \bR^d) \to \mathbb{R}\) is said to be a \emph{viscosity supersolution} to \eqref{eq: PIDE} if it is lower semicontinuous and the following two properties hold:
	\begin{enumerate}
		\item[\textup{(a)}] \(u^n(T, \cdot) \geq \psi^n\);
		\item[\textup{(b)}]
		for any \((t,\omega) \in [0, T) \times C([0, T]; \bR^d)\) and \( \phi \in \mathbb{C}^{1, 2}_{pol} ([0, T])\) satisfying
		\begin{align*}
			0 &= (u^n - \phi)(t,\omega) 
			\\&= \inf \big\{ (u^n - \phi)(s,\omega') \colon (s,\omega') \in [t, T] \times C([0, T]; \bR^d) \big\},
		\end{align*}
		we have
		\(
		\p \phi (t, \omega) + G^n (t, \omega, \phi) \leq 0.
		\)
	\end{enumerate}
	Further, \(u^n\) is called a \emph{viscosity solution} if it is a viscosity sub- and supersolution. 
\end{definition}

Under suitable conditions, it was proved in \cite{cosso2,zhou} that the PPDE \eqref{eq: PIDE} has a unique bounded viscosity solution among all solutions with a certain (uniform or Lipschitz) continuity property. In \cite{CN23a} a stochastic representation as value function related to so-called nonlinear continuous semimartingales was established. The following condition collects the assumptions needed to use the uniqueness theorem from \cite{zhou} and the stochastic representation from~\cite{CN23a}.

\begin{condition} \label{cond: main2}
	For every \(n \in \mathbb{Z}_+\), the following properties hold:
	\begin{enumerate}
		\item[\textup{(i)}] \(F\) is a compact metrizable space.
		\item[\textup{(ii)}] \(b^n\) and \(\sigma^n\) are continuous on \(F \times [0, T] \times C([0, T]; \bR^d)\).
		\item[\textup{(iii)}] \(\Theta^n\) is convex-valued.
		\item[\textup{(iv)}] There exists a constant \(C = C_n > 0\) such that  
		\[
		\| b^n (f, t, \omega) \| + \|\sigma^n (f, t, \omega)\| \leq C \Big(1 + \sup_{s \in [0, t]} \|\omega (s)\| \Big)
		\]
		for all \((f, t, \omega) \in F \times [0, T] \times C([0, T]; \bR^d)\). 
		\item[\textup{(v)}] There exists a constant \(C = C_n > 0\) such that 
		\begin{align*}
			\| b^n (f, t, \omega) - b^n (f, t, \alpha) \| & \leq C \sup_{s \in [0, t]} \|\omega (s) - \alpha (s)\|,\\
			\|\sigma^n (f, t, \omega) - \sigma^n (f, t, \alpha)\| &\leq C \sup_{s \in [0, t]} \|\omega (s) - \alpha (s)\|
		\end{align*}
		for all \(\omega, \alpha \in C([0, T]; \bR^d)\) and \((f, t) \in F \times [0, T]\).
		\item[\textup{(vi)}] The function \(\psi^n\) is Lipschitz continuous, i.e., there exists a constant \(C = C_n > 0\) such that 
		\[
		\|\psi^n (\omega) - \psi^n (\alpha)\| \leq C \sup_{s \in [0, T]} \|\omega (s) - \alpha (s)\|
		\]
		for all \(\omega, \alpha \in C([0, T]; \bR^d)\). 
	\end{enumerate}
\end{condition}
We emphasize that the Lipschitz constants in (v) and (vi) are allowed to depend on \(n\). 
We define a function \(\dd \colon [0, T] \times C([0, T]; \bR^d) \times [0, T] \times C([0, T]; \bR^d) \to \bR_+\) by
\begin{equation} \label{eq: def d}
	\begin{split}
		\dd ( (t,\omega), (s, \alpha)) :=  \Big( 1 + \sup_{r \in [0, t]} \|\omega (r)\| + \ & \sup_{r \in [0, s]}  \|\alpha (r)\|\Big) | t - s |^{1/2} \\&+ \sup_{r \in [0, T]} \|\omega(r \wedge t) - \alpha( r \wedge s) \|.
	\end{split}
\end{equation}
A function \(f \colon [0, T] \times C([0, T]; \bR^d) \to \mathbb{R}\) is called {\em \(\dd\)-Lipschitz continuous}, if there exists a constant \(L > 0\) such that 
\[
| f (t, \omega) - f (s, \alpha) | \leq L \dd ((t, \omega), (s, \alpha))
\] 
for all \((t, \omega), (s, \alpha) \in [0, T] \times C([0, T]; \bR^d)\).

The following existence and uniqueness result is a consequence of  \cite[Theorem~6.2]{zhou}.

\begin{theorem} \label{thm: uni}
	Assume that Condition \ref{cond: main2} holds. For every \(n \in \mathbb{Z}_+\), the PPDE \eqref{eq: PIDE} has a bounded viscosity solution that is unique among all bounded \(\dd\)-Lipschitz continuous viscosity solutions.
\end{theorem}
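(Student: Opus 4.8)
The plan is to deduce the statement from \cite[Theorem~6.2]{zhou} by checking that the framework of \eqref{eq: PIDE} together with Condition~\ref{cond: main2} fits the hypotheses of that theorem. To this end I would first recast the generator \eqref{eq: G^n} in standard Hamiltonian form. Fixing \(n \in \mathbb{Z}_+\) and writing \(a^n := \sigma^n (\sigma^n)^*\), set
\[
\G^n (t, \omega, z, \gamma) := \sup_{f \in F} \Big( \langle z, b^n (f, t, \omega) \rangle + \tfrac12 \tr [\gamma\, a^n (f, t, \omega)] \Big), \qquad (z, \gamma) \in \bR^\d \times \mathbb{S}^\d,
\]
where \(\mathbb{S}^\d\) denotes the symmetric \(\d \times \d\) matrices, so that \(G^n (t, \omega, \phi) = \G^n (t, \omega, \nabla \phi (t, \omega), \nabla^2 \phi (t, \omega))\). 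The problem then reduces to verifying that \(\G^n\) and \(\psi^n\) meet the structural, regularity and growth conditions imposed in \cite{zhou}.

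The verification proceeds assumption by assumption. Degenerate ellipticity holds because each \(a^n (f, t, \omega)\) is symmetric positive semidefinite, so \(\gamma \mapsto \tr[\gamma\, a^n (f, t, \omega)]\) is nondecreasing in the Loewner order and a supremum of nondecreasing maps is nondecreasing; non-anticipativity of \(\G^n\) is inherited from the assumption that \(b^n, \sigma^n\) depend on \((t, \omega)\) only through \((\omega (s))_{s < t}\). For the remaining regularity I would use the elementary bound \(|\sup_f A_f - \sup_f B_f| \le \sup_f |A_f - B_f|\) to transfer the per-control estimates to the supremum: the per-\(f\) map is linear in \((z, \gamma)\), so the linear growth (iv) gives local Lipschitz continuity of \(\G^n\) in \((z, \gamma)\) with the correct growth, while continuity (ii) and the path-Lipschitz bounds (v) yield the modulus of continuity of \(\G^n\) in \((t, \omega)\) that \cite{zhou} requires, measured through the metric \(\dd\) of \eqref{eq: def d}; compactness of \(F\) in (i) guarantees that the suprema are attained and that all estimates hold uniformly in \(f\). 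The terminal condition is handled directly by the Lipschitz assumption (vi), which is precisely the \(\dd\)-Lipschitz requirement on the time-\(T\) slice.

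It then remains to produce a \emph{bounded} solution and to invoke the comparison principle underlying \cite[Theorem~6.2]{zhou} for uniqueness within the class of bounded \(\dd\)-Lipschitz viscosity solutions. Boundedness follows because \(\psi^n\) is bounded and \(\G^n (t, \omega, 0, 0) = 0\), so that the constants \(\pm \|\psi^n\|_\infty\) are classical super- and subsolutions of \eqref{eq: PIDE} and comparison squeezes \(v^n\) between them; alternatively it is immediate from the representation \(v^n = \sup_{P} E^P [\psi^n]\) of \cite{CN23a}. I would note that the convexity assumption (iii) on \(\Theta^n\) plays no role in this deduction — it is needed only for the stochastic representation of \cite{CN23a}, and is included here because Condition~\ref{cond: main2} collects the hypotheses for both results at once.

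The main obstacle I anticipate is essentially bookkeeping: matching the precise modulus-of-continuity condition of \cite{zhou} on the Hamiltonian in the \((t, \omega)\) variables, in particular reconciling the \(|t - s|^{1/2}\) time-scaling and the growth factor in \(\dd\) with the linear-growth and Lipschitz bounds (iv)--(v). The second-order term is what forces the square-root time modulus, and some care is needed to combine the spatial and temporal estimates so that the resulting bound is genuinely of the form demanded by the cited theorem; once this matching is carried out, the conclusion is immediate.
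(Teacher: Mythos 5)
Your proposal follows exactly the route the paper takes: the paper offers no independent argument but simply deduces Theorem~\ref{thm: uni} from \cite[Theorem~6.2]{zhou}, with Condition~\ref{cond: main2} designed precisely to collect that theorem's hypotheses (plus those of the stochastic representation of \cite{CN23a}, which is why (iii) appears there despite being unused here, just as you observe). Your hypothesis-by-hypothesis verification, including the passage via \(|\sup_f A_f - \sup_f B_f| \le \sup_f |A_f - B_f|\) and the boundedness argument, is the correct fleshing-out of this citation, so the proposal is sound and essentially identical in approach.
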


Next, we provide a convergence result for the nonlinear PPDEs \eqref{eq: PIDE}.

\begin{condition} \label{cond: main3}
	\quad 
	\begin{enumerate}
		\item[\textup{(i)}]
		For all compact sets \(G \subset [0, T] \times C([0, T]; \bR^d)\) and \(A \subset C([0, T]; \bR^d)\), 
		\begin{align*}
			\sup \big\{ \|b^n (f, t, \omega) - b^0 (f, t, \omega)\| \colon (f, t, \omega) \in F \times G \big\} &\to 0, \\
			\sup \big\{ \|\sigma^n (f, t, \omega) - \sigma^0 (f, t, \omega) \| \colon (f, t, \omega)\in F \times G \big\} &\to 0, \\
			\sup \big\{ | \psi^n (\omega) - \psi^0 (\omega) |\colon \omega \in A \big\} &\to 0.
		\end{align*}
		\item[\textup{(ii)}]
		There exists a constant \(C > 0\) such that 
		\[
		\| b^n (f, t, \omega) \| + \|\sigma^n (f, t, \omega)\| \leq C \Big(1 + \sup_{s \in [0, t]} \|\omega (s)\| \Big)
		\]
		for all \(n \in \mathbb{Z}_+\) and \((f, t, \omega) \in F \times [0, T] \times C([0, T]; \bR^d)\). 
		\item[\textup{(iii)}] There exists a constant \(C > 0\) such that \(|\psi^n (\omega)| \leq C\) for all \(n \in \mathbb{Z}_+\) and \(\omega \in C([0, T]; \bR^d)\).
	\end{enumerate}
\end{condition}
The following theorem is our main result. Its proof is given in Section~\ref{sec: pf stability} below. 
\begin{theorem} \label{thm: stability}
	Assume that the Conditions \ref{cond: main2} and \ref{cond: main3} hold. For every \(n \in \mathbb{Z}_+\), let \(v^n\) be the unique, among all bounded \(\dd\)-Lipschitz continuous, viscosity solution to \eqref{eq: PIDE}. 
	Then, for every compact set \(G \subset [0, T] \times C([0, T]; \bR^d)\), 
	\[
	\sup_{ (t, \omega) \in G} | v^n (t, \omega) - v^0 (t, \omega) | \to 0 \text{ as } n \to \infty.
	\]
\end{theorem}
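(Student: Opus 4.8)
The plan is to bypass PDE-theoretic stability and comparison arguments entirely and instead transport the convergence through the stochastic representation of the solutions. By Theorem~\ref{thm: uni} each $v^n$ is the unique bounded $\dd$-Lipschitz viscosity solution, and under Condition~\ref{cond: main2} the results of \cite{CN23a} identify it with the value function
\[
v^n(t,\omega) = \sup_{P \in \cA^n(t,\omega)} E^P\big[\psi^n(X)\big].
\]
I would then embed the whole sequence into a single one-parameter family. Set $K := \{1/n : n \in \N\} \cup \{0\}$, regarded as a compact, metrizable (hence first countable) subset of $\bR$ with $1/n \to 0$, and define $b(1/n,\cdot) := b^n$, $b(0,\cdot) := b^0$, and analogously $\sigma$ and $\psi$, extending the coefficients from $[0,T]$ to $\bR_+$ in the standard way (freezing the time argument at $T$) so that the abstract framework of Section~\ref{sec: setting} applies. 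With the resulting extended value function $v(k,t,\omega) = \sup_{P \in \cA(k,t,\omega)} E^P[\psi(k,t,X)]$ one has $v(1/n,t,\omega) = v^n(t,\omega)$ and $v(0,t,\omega) = v^0(t,\omega)$, so the whole theorem reduces to joint continuity of $(k,t,\omega)\mapsto v(k,t,\omega)$.

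The next step is to verify that the extended coefficients satisfy the standing assumptions of the abstract continuity theorem. Continuity in $(f,t,\omega)$ at each fixed isolated $k=1/n$ is Condition~\ref{cond: main2}(ii), convexity of the characteristic sets is \ref{cond: main2}(iii), and the uniform linear growth and the uniform bound on $\psi$ are \ref{cond: main3}(ii),(iii). The one genuinely new requirement is joint continuity at $k=0$: for $n_j \to \infty$, $f_j \to f$ and $(t_j,\omega_j)\to(t,\omega)$, I would use the splitting
\[
b^{n_j}(f_j,t_j,\omega_j) - b^0(f,t,\omega)
= \big(b^{n_j}-b^0\big)(f_j,t_j,\omega_j) + \big(b^0(f_j,t_j,\omega_j)-b^0(f,t,\omega)\big),
\]
whose first term vanishes by the compact convergence in Condition~\ref{cond: main3}(i) (the points $(f_j,t_j,\omega_j)$ lie in a fixed compact set $F\times G$) and whose second term vanishes by continuity of $b^0$; the same argument handles $\sigma$ and $\psi$.

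Granting the joint continuity of $v$ supplied by Section~\ref{sec: setting}, the uniform conclusion follows by a routine compactness argument that I would carry out by contradiction. If the convergence failed on some compact $G$, there would exist $\epsilon>0$, a subsequence $n_j$ and points $(t_j,\omega_j)\in G$ with $|v(1/n_j,t_j,\omega_j)-v(0,t_j,\omega_j)|\ge\epsilon$; compactness of $G$ lets me pass to $(t_j,\omega_j)\to(t^\ast,\omega^\ast)\in G$, so that both $(1/n_j,t_j,\omega_j)$ and $(0,t_j,\omega_j)$ converge to $(0,t^\ast,\omega^\ast)$ in $K\times[0,T]\times C([0,T];\bR^d)$, whence joint continuity forces the difference to $0$, a contradiction.

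The real difficulty lies not in this reduction but in the joint continuity of the extended value function, which is the technical heart of the paper. Upper hemicontinuity of the law-sets $\cA(\cdot)$ together with Berge's maximum theorem yields continuity from above exactly as in the $k$-independent treatment of \cite{CN23a}. The delicate half is \emph{lower} hemicontinuity in the full variable $(k,t,\omega)$, where the dependence of the characteristic sets $\Theta(k,\cdot)$ on the parameter $k$ must be tracked carefully across the transition $k=1/n\to 0$. Here I would adapt the martingale-problem stability arguments of \cite{jacod1981weak}: given a target law in $\cA(0,t^\ast,\omega^\ast)$, construct approximating laws in $\cA(1/n_j,t_j,\omega_j)$ whose characteristics are steered into $\Theta(1/n_j,\cdot)$ while their expectations of $\psi(1/n_j,t_j,X)$ converge to the target value. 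This construction is where the quantitative content of Condition~\ref{cond: main3}(i),(ii) is consumed.
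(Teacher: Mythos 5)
Your proposal is correct and takes essentially the same route as the paper: the same embedding into the extended value function over \(K=\{1/n\colon n\in\N\}\cup\{0\}\) with coefficients frozen at time \(T\), the same verification of Conditions \ref{cond: main1} and \ref{cond: glob Lip}, the identification of \(v^n\) with the value function (where the paper is slightly more explicit that Theorem \ref{thm: lip cont VF} is what places the value function in the \(\dd\)-Lipschitz uniqueness class of Theorem \ref{thm: uni}), and the reduction to the joint continuity supplied by Theorem \ref{thm: main1}. The only cosmetic difference is the final step, where you run a compactness-contradiction argument while the paper cites \cite{remmert} for the equivalence of continuous convergence and uniform convergence on compacts.
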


We end this section with an example that is related to the random \(G\)-expectations from \cite{nutzRGE,NVH}.

\begin{example} 
	\label{ex: random G BM}
	In this example we consider the case \(d = 1\).
	For \(n \in \mathbb{Z}_+\), let \(\underline{b}\hspace{0.5cm} \hspace{-0.5cm}^n, \overline{b} \hspace{0.5cm} \hspace{-0.5cm}^n \colon [0, T] \times C([0, T]; \bR) \to \mathbb{R}\) and \(\underline{a}^n, \overline{a}^n \colon [0, T] \times C([0, T]; \bR) \to (0, \infty)\) be continuous functions such that \(\underline{b}^n \leq \overline{b} \hspace{0.5cm} \hspace{-0.5cm} ^n\) and \(\underline{a}^n \leq \overline{a}^n\).
	To ease our presentation, we assume that there exists a constant \(C > 0\) such that 
	\[
	| \underline{b}\hspace{0.5cm} \hspace{-0.5cm}^n (t, \omega) |, | \overline{b}\hspace{0.5cm} \hspace{-0.5cm}^n (t, \omega) |  \leq C, \qquad \frac{1}{C} \leq \underline{a}^n (t, \omega), \overline{a}^n (t, \omega) \leq C
	\]
	for all \(n \in \mathbb{Z}_+\) and \((t, \omega) \in [0, T] \times C([0, T]; \bR)\).
	Further, for every \(n \in \mathbb{Z}_+\), suppose that there exists a constant \(C = C_n > 0\) such that 
	\begin{align*}
		| \underline{b}\hspace{0.5cm} \hspace{-0.5cm}^n (t, \omega) - \underline{b}\hspace{0.5cm} \hspace{-0.5cm}^n (t, \alpha) | + |\overline{b}\hspace{0.5cm} \hspace{-0.5cm}^n (t, \omega) - \overline{b}\hspace{0.5cm} \hspace{-0.5cm}^n (t, \alpha) | &\leq C \sup_{s \in [0, t]} | \omega (s) - \alpha (s) |, \\
		| \underline{a}\hspace{0.5cm} \hspace{-0.5cm}^n (t, \omega) - \underline{a}\hspace{0.5cm} \hspace{-0.5cm}^n (t, \alpha) | + |\overline{a}\hspace{0.5cm} \hspace{-0.5cm}^n (t, \omega) - \overline{a}\hspace{0.5cm} \hspace{-0.5cm}^n (t, \alpha) | &\leq C \sup_{s \in [0, t]} | \omega (s) - \alpha (s) |
	\end{align*}
	for all \(t \in [0, T]\) and \(\omega, \alpha \in C([0,T]; \bR)\). Let \(\psi^0, \psi^1, \dots\) be bounded Lipschitz continuous functions from \(C([0, T]; \bR)\) into \(\mathbb{R}\) such that Condition \ref{cond: main3} (iii) holds.
	
	We consider the nonlinear operator 
	\begin{align*}
		G^n (t, \omega, \phi) := \sup \big\{ \partial \phi  (t&, \omega) x \colon \underline{b}\hspace{0.5cm} \hspace{-0.5cm}^n (t, \omega) \leq x \leq \overline{b}\hspace{0.5cm} \hspace{-0.5cm}^n (t, \omega) \big\} \\&+ \tfrac{1}{2}\sup \big\{ \partial^2 \phi  (t, \omega) y \colon \underline{a}^n (t, \omega) \leq y \leq \overline{a}^n (t, \omega) \big\}
	\end{align*}
	for \((t, \omega, \phi) \in [0, T] \times C([0, T]; \bR) \times \mathbb{C}^{1, 2}_{pol} ([0, T])\). Let \(v^n\) be the unique, among all bounded \(\dd\)-Lipschitz continuous, viscosity solutions to~\eqref{eq: PIDE}.
	Theorem \ref{thm: stability} shows the following implication:
	\begin{align*}
		\underline{b}\hspace{0.5cm} \hspace{-0.5cm}^n \to \underline{b}\hspace{0.5cm} \hspace{-0.5cm}^0, \overline{b}\hspace{0.5cm} \hspace{-0.5cm}^n \to \overline{b}\hspace{0.5cm} \hspace{-0.5cm}^0, \underline{a}^n \to \underline{a}^0, \overline{a}^n \to \overline{a}^0, \psi^n \to \psi^0& \text{ compactly}\hspace{0.5cm} \\&\Longrightarrow \hspace{0.5cm} v^n \to v^0 \text{ compactly},
	\end{align*}
	where compact convergence refers to uniform convergence on compact subsets.
	
	More precisely, we can apply Theorem \ref{thm: stability} with \(F := [0, 1] \times [0, 1]\) and 
	\begin{align*}
		b^n (f, t, \omega) & := \underline{b}\hspace{0.5cm} \hspace{-0.5cm}^n (t, \omega) + f_1 \cdot (\overline{b}\hspace{0.5cm} \hspace{-0.5cm}^n (t, \omega) - \underline{b}\hspace{0.5cm} \hspace{-0.5cm}^n (t, \omega)),\\
		\sigma^n (f, t, \omega) &:= \sqrt{\underline{a}^n (t, \omega) + f_2 \cdot (\overline{a}^n (t, \omega) - \underline{a}^n (t, \omega))}, 
	\end{align*}
	for \(f = (f_1, f_2) \in F\) and \((t, \omega) \in [0, T] \times C([0, T]; \bR)\). 
\end{example}

\section{Extended Value Functions and the Proof of Theorem \ref{thm: stability}} \label{sec: EVF and pf stability}

In this section we prove our main Theorem \ref{thm: stability}. At the heart of its proof lies the stochastic representation of the functions \(v^0, v^1, \dots\) as value functions corresponding to so-called {\em nonlinear continuous semimartingales} as introduced in \cite{CN23a}. In Section~\ref{sec: setting}, we discuss an extension of the stochastic framework for this representation. The object of interest is a dynamic sublinear expectation that is called {\em extended value function}.
The key step of the proof for Theorem~\ref{thm: stability} is to establish joint continuity of the extended value function. The details for this are given in Section~\ref{sec: pf stability}, while the continuity of the extended value function is established in Section \ref{sec: pf regularity}.

\subsection{Extended value functions and their continuity}\label{sec: setting}
Without loss of generality, we work with an infinite time horizon. 
Define $\Omega$ to be the space of all continuous functions \(\mathbb{R}_+ \to \mathbb{R}^\d\) endowed with the local uniform topology. 
Further, we write \(X\) for the canonical process on the path space \(\Omega\), i.e., \(X_t (\omega) = \omega (t)\) for \(\omega \in \Omega\) and \(t \in \mathbb{R}_+\). 
It is well-known that \(\mathcal{F} := \mathcal{B}(\Omega) = \sigma (X_t, t \geq 0)\).
We define $(\mathcal{F}_t)_{t \geq 0}$ as the canonical filtration generated by $X$, i.e., \(\mathcal{F}_t := \sigma (X_s, s \leq t)\) for \(t \in \mathbb{R}_+\). 
Further, we denote the predictable \(\sigma\)-field by \(\mathscr{P}\).
The set of probability measures on \((\Omega, \mathcal{F})\) is denoted by \(\mathfrak{P}(\Omega)\) and endowed with the usual topology of convergence in distribution.
To lighten our notation, we use stochastic intervals, i.e., for two stopping times \(S\) and \(T\), we set
\[
\of S, T \of \hspace{0.1cm} := \{ (t,\omega) \in \bR_+ \times \Omega \colon S(\omega) \leq t < T(\omega) \}.
\]
The stochastic intervals \( \gs S, T \of, \of S, T \gs, \gs S, T \gs  \) are defined accordingly.
In particular,  \(\of 0, \infty\of \hspace{0.1cm} = \bR_+ \times \Omega\). 

Let \(K\) and \(F\) be topological spaces and 
let \(b \colon K \times F \times \of 0, \infty\of \hspace{0.05cm}\to \mathbb{R}^\d\) and \(\sigma \colon K \times F \times \of 0, \infty\of \hspace{0.05cm}\to \mathbb{R}^{d \times r}\) be Borel functions such that \((t, \omega) \mapsto b(k, f, t, \omega)\) and \((t, \omega) \mapsto \sigma (k, f, t, \omega)\) are \(\mathscr{P}\)-measurable for every \((k,f) \in K\times F\).\footnote{By \cite[Theorem IV.97]{DM}, a Borel map \(f \colon \of 0, \infty\of \hspace{0.05cm} \to \bR\) is \(\mathscr{P}\)-measurable if and only if \(f (t, \omega)\) depends on \((t, \omega)\) only through the values \((\omega (s))_{s < t}\). This characterization of predictability was used in Section \ref{sec: vis}.} 
We define a correspondence, i.e., a set-valued mapping, \(\Theta \colon K \times \of 0, \infty\of \hspace{0.05cm} \twoheadrightarrow \mathbb{R}^\d \times \bR^{d \times d}\) by
\[
\Theta (k, t, \omega) := \big\{(b (k, f, t, \omega), \sigma \sigma^* (k, f, t, \omega)) \colon f \in F \big\}.
\]
Below, we will impose the following conditions on \(K, F, b\) and \(\sigma\).

\begin{condition} \label{cond: main1}
	\quad
	\begin{enumerate}
		\item[\textup{(i)}]
		\(K\) is first countable and \(F\) is a compact metrizable space.
		\item[\textup{(ii)}] 
		The functions \(b\) and \(\sigma\) are continuous on \(K \times F \times \of 0, \infty\of\).
		\item[\textup{(iii)}] \(\Theta\) is convex-valued. 
		\item[\textup{(iv)}] 
		For every \(T > 0\) and every compact set \(G \subset K\), there exists a constant \(C = C (T, G) > 0\) such that
		\[
		\|b (k, f, t, \omega)\| + \|\sigma (k, f, t, \omega)\| \leq C \Big( 1 + \sup_{s \in [0, t]} \|\omega (s)\| \Big)
		\]
		for all \((k, f, t, \omega) \in G \times F \times \of 0, T\gs\).
		\item[\textup{(v)}]
		For every \(k \in K\) and \(T > 0\), there exists a constant \(C = C (k, T) > 0\) such that 
		\begin{align*}
			\| b (k, f, t, \omega) - b (k, f, t, \alpha) \| &\leq C \ \sup_{s \in [0, t]} \|\omega (s) - \alpha (s)\|, \\
			\|\sigma (k, f, t, \omega) - \sigma (k, f, t, \alpha)\| &\leq C \ \sup_{s \in [0, t]} \|\omega (s) - \alpha (s)\|
		\end{align*}
		for all \(f \in F, t \in [0, T]\) and \(\omega, \alpha \in \Omega \colon \sup_{s \in [0, t]} \|\omega (s)\| \vee \|\alpha (s)\| \leq T\).
	\end{enumerate}
\end{condition}
We draw the readers attention to the fact that the local Lipschitz constant from~(v) is uniform in \(f\) but allowed to depend on~\(k\).

We call an \(\bR^\d\)-valued continuous process \(Y = (Y_t)_{t \geq 0}\) a (continuous) \emph{semimartingale after a time \(t^* \in \mathbb{R}_+\)} if the process \(Y_{\cdot + t^*} = (Y_{t + t^*})_{t \geq 0}\) is a \(\d\)-dimensional semimartingale for its natural right-continuous filtration.
The law of a semimartingale after \(t^*\) is said to be a \emph{semimartingale law after \(t^*\)} and the set of them is denoted by \(\fPs (t^*)\).
For \(P \in \fPs (t^*)\) we denote the semimartingale characteristics of the shifted coordinate process \(X_{\cdot + t^*}\) by \((B^P_{\cdot + t^*}, C^P_{\cdot + t^*})\). 
Moreover, we set 
\begin{align*}
	\fPas (t^*) &:= \big\{ P \in \fPs (t^*) \colon P\text{-a.s. } (B^P_{\cdot + t^*}, C^P_{\cdot + t^*}) \ll \llambda \big\}, \\ 
	\fPas &:= \fPas (0),
\end{align*}
where \(\llambda\) denotes the Lebesgue measure. 
Finally, for \(k \in K\) and $(t,\omega) \in \of 0, \infty\of$, we define 
\begin{align*}
	\cC(k, t,\omega) := \Big\{ P \in \mathfrak{P}_{\text{sem}}^{\text{ac}}(t)\colon &P(X = \omega \text{ on } [0, t]) = 1, 
	(\llambda \otimes P)\text{-a.e. } \\&\qquad (dB^P_{\cdot + t} /d\llambda, dC^P_{\cdot + t}/d\llambda) \in \Theta (k, \cdot + t, X) \Big\}.
\end{align*}

Fix a bounded continuous function \(\psi \colon K \times \of 0, \infty\of \hspace{0.05cm} \to \mathbb{R}\) and define the so-called \emph{extended value function} by
\[
v (k, t, \omega) := \sup_{P \in \cA (k, t, \omega)} E^P \big[ \psi (k, t, X) \big], \quad (k, t, \omega) \in K \times \of 0, \infty\of.
\]
Various properties of value functions without \(k\)-dependence have been studied in~\cite{CN23a}. In this paper, we are explicitly interested in the \(k\)-variable. The following theorem shows that the value function is {\em jointly continuous} under Condition~\ref{cond: main1}. Its proof is given in Section \ref{sec: pf regularity}.
\begin{theorem} \label{thm: main1}
	Assume that Condition \ref{cond: main1} holds. Then, the extended value function
	\(
	(k, t, \omega) \mapsto v (k, t, \omega)
	\)
	is bounded and continuous. 
\end{theorem}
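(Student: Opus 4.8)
The plan is to obtain joint continuity from \emph{Berge's maximum theorem}, applied to the correspondence
\[
(k,t,\omega) \mapsto \cA(k,t,\omega) \subset \mathfrak{P}(\Omega)
\]
and the objective \(((k,t,\omega),P) \mapsto E^P[\psi(k,t,X)]\). Boundedness of \(v\) is immediate from the boundedness of \(\psi\). Continuity of the objective follows from a standard argument: if \((k_m,t_m,\omega_m,P_m) \to (k,t,\omega,P)\), then splitting
\[
E^{P_m}[\psi(k_m,t_m,X)] - E^P[\psi(k,t,X)] = E^{P_m}\big[\psi(k_m,t_m,X) - \psi(k,t,X)\big] + \big(E^{P_m} - E^P\big)[\psi(k,t,X)],
\]
the second term vanishes by weak convergence since \(\psi(k,t,\cdot)\) is bounded and continuous, while the first vanishes using the joint continuity and boundedness of \(\psi\) together with the tightness of \((P_m)\). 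Thus only the two hemicontinuity hypotheses of Berge's theorem remain, and these are the heart of the matter. Nonemptiness, tightness (from the growth bound in Condition~\ref{cond: main1}(iv) via standard moment criteria for semimartingale laws) and closedness show that \(\cA(k,t,\omega)\) is a nonempty compact subset of \(\mathfrak{P}(\Omega)\).

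For \emph{upper hemicontinuity} I would argue exactly as in the \(k\)-independent case of \cite{CN23a}: given \((k_m,t_m,\omega_m) \to (k,t,\omega)\) and \(P_m \in \cA(k_m,t_m,\omega_m)\) with \(P_m \to P\), the uniform moment bounds guarantee that \(P\) is again a semimartingale law with absolutely continuous characteristics, and the continuity of \(b\) and \(\sigma\) (Condition~\ref{cond: main1}(ii)) together with the convexity and closedness of \(\Theta\) (Condition~\ref{cond: main1}(iii)) allow one to pass the characteristic inclusion to the limit, yielding \(P \in \cA(k,t,\omega)\). The initial-condition constraint \(P(X=\omega \text{ on }[0,t])=1\) survives the limit because \(\omega_m \to \omega\) locally uniformly and \(t_m \to t\).

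The main obstacle is \emph{lower hemicontinuity}: given \(P \in \cA(k,t,\omega)\) and \((k_m,t_m,\omega_m) \to (k,t,\omega)\), one must produce \(P_m \in \cA(k_m,t_m,\omega_m)\) with \(P_m \to P\). The plan is constructive. First, writing \((\beta_s,\gamma_s) := (dB^P_{s+t}/d\llambda,\, dC^P_{s+t}/d\llambda) \in \Theta(k,s+t,X)\), a measurable selection argument (Filippov's implicit-function lemma, using the continuity of \(f \mapsto (b,\sigma\sigma^*)\) and compactness of \(F\)) yields a predictable \(F\)-valued control \((f_s)\) with \(\beta_s = b(k,f_s,s+t,X)\) and \(\gamma_s = \sigma\sigma^*(k,f_s,s+t,X)\). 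Next, on a possibly enlarged probability space carrying an \(r\)-dimensional Brownian motion \(W\), represent the shifted coordinate process as the solution of
\[
X_{s+t} = \omega(t) + \int_0^s b(k,f_u,u+t,X)\,du + \int_0^s \sigma(k,f_u,u+t,X)\,dW_u .
\]
Then, keeping the same \(W\) and the same control \((f_u)\), let \(X^m\) solve the analogous equation with \((k,t,\omega)\) replaced by \((k_m,t_m,\omega_m)\); the local Lipschitz condition (Condition~\ref{cond: main1}(v), uniform in \(f\)) makes these equations well-posed, and by construction the characteristics of \(X^m\) lie in \(\Theta(k_m,\cdot + t_m, X^m)\), so that \(P_m := \mathrm{Law}(X^m) \in \cA(k_m,t_m,\omega_m)\).

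It remains to prove \(P_m \to P\), and this is where the argument is most delicate and where I would adapt the techniques of \cite{jacod1981weak}. Because the coefficients are path-dependent and merely continuous (rather than Markovian), and because the exogenous control \((f_u)\) couples all the equations, classical pathwise Gronwall-type stability does not apply directly. Instead the plan is to establish weak convergence through the \emph{semimartingale characteristics}: the growth bound (Condition~\ref{cond: main1}(iv)) yields uniform moment estimates, hence tightness of \((P_m)\); the local uniform convergence of \(b\) and \(\sigma\) (Condition~\ref{cond: main1}(ii)) together with the convergence of the initial data \(\omega_m(t_m) \to \omega(t)\) and the time shifts \(t_m \to t\) gives convergence of the characteristics of \(X^m\) towards those of \(X\); and a Jacod--M\'emin-type criterion then upgrades this characteristic convergence, combined with uniqueness of the limiting equation, to weak convergence \(P_m \to P\). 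Once both hemicontinuities are in hand, Berge's maximum theorem delivers the joint continuity of \(v\).
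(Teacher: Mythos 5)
Your overall architecture coincides with the paper's: boundedness is immediate from \(\psi\), joint continuity comes from Berge's maximum theorem, upper hemicontinuity is taken over from the \(k\)-independent case of \cite{CN23a}, and for lower hemicontinuity you use measurable selection to extract a predictable control \(\f\), realize \(P\) as the law of an SDE solution on an enlarged space, and solve the perturbed SDEs with the \emph{same} Brownian motion and the \emph{same} control \(\f(\cdot, X)\) to produce the candidates \(P_m\). One bookkeeping discrepancy: you state Berge for the correspondence \(\cC\) itself, but your SDE construction produces laws of \emph{shifted} processes started at \(\omega_m(t_m)\); the paper first rewrites \(v\) through the concatenated correspondence \(\cR\) (Lemma \ref{lem: chara EVF}), which normalizes all measures to time \(0\) and moves the \((k,t,\omega)\)-dependence into \(\Theta(k, \cdot + t, \omega\ \widetilde{\otimes}_t\ X)\) --- this is precisely the parametrization your construction implicitly uses, so this is a repair, not a flaw.

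The genuine gap is in the decisive step, where you write that local uniform convergence of \(b, \sigma\) ``gives convergence of the characteristics of \(X^m\) towards those of \(X\)'' and that a ``Jacod--M\'emin-type criterion'' plus uniqueness upgrades this to \(P_m \to P\). As stated this cannot be run: the drift and diffusion of \(X^m\) are \(b(k_m, \f(u, X), u + t_m, \omega_m\ \widetilde{\otimes}_{t_m}\ X^m)\), i.e.\ functionals of the \emph{pair} \((X, X^m)\) that are merely measurable in the exogenous component \(X\), so ``convergence of characteristics'' is neither a meaningful statement along weak convergence of \(\operatorname{Law}(X^m)\) nor accessible to any continuous-mapping argument --- which is exactly the obstruction you yourself flag, and no stability theorem for martingale problems with merely measurable random coefficients is available in that form. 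The paper's resolution, i.e.\ the actual content of the adaptation of \cite{jacod1981weak}, is to pass to the \emph{joint} laws \(R^m = Q \circ (\on{id}, X^m)^{-1}\) on \(\Sigma \times \Omega\) and work with weak-strong convergence: the \(\Sigma\)-marginals are constantly \(Q\), so \cite[Theorem~2.5]{CPS} yields a weak-strong convergent subsequence; the continuous mapping theorem for bounded Carath\'eodory integrands \cite[Theorem~2.9]{CPS} (with the uniform moment bound \eqref{eq: poly moments} for uniform integrability) allows passage to the limit in a martingale problem whose integrands are measurable in the \(\Sigma\)-variable and continuous in the \(\Omega\)-variable; the limit \(R\) is then identified as a \emph{good solution measure} of the limiting SDE via \cite[Theorem~2.10]{jacod1981weak}; and only at this point does pathwise uniqueness from Condition \ref{cond: main1} (v) enter, through \cite[Theorems~2.22, 2.25(b), 4.5]{jacod1981weak}, to force \(R(d\sigma, d\omega) = \delta_{X(\sigma)}(d\omega)\, Q(d\sigma)\), whence \(P^{N_n} \to P\). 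Note that uniqueness acts through graph-concentration of good solution measures, not as ``uniqueness of the limit equation implies weak convergence,'' and that subsequential convergence suffices for lower hemicontinuity by \cite[Theorem~17.21]{charalambos2013infinite}. Without the weak-strong device your final step has no theorem behind it; with it, your outline becomes the paper's proof.
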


For every \(T > 0\), we define \(\dd = \dd_T \colon \of 0, T \gs \times \of 0, T \gs \to \mathbb{R}_+\) by the formula \eqref{eq: def d}. The following result is a slight variation of \cite[Theorem 4.12]{CN23a}. We sketch the proof in Section \ref{sec: pf Lip cont}.

\begin{condition} \label{cond: glob Lip}
	For every \(k \in K\) and \(T > 0\), there exists a constant \(C = C (k, T) >0\) such that 
	\begin{align*}
		\| b (k, f, t, \omega) - b (k, f, t, \alpha) \| &\leq C \ \sup_{s \in [0, t]} \|\omega (s) - \alpha (s)\|,\\
		\|\sigma (k, f, t, \omega) - \sigma (k, f, t, \alpha)\| &\leq C \ \sup_{s \in [0, t]} \|\omega (s) - \alpha (s)\|
	\end{align*}
	for all \(f \in F, t \in [0, T]\) and \(\omega, \alpha \in \Omega\).
\end{condition}

\begin{theorem} \label{thm: lip cont VF}
	Assume that the Conditions \ref{cond: main1} and \ref{cond: glob Lip} hold. 
	Furthermore, assume that \(\psi\) is independent of time, i.e., a bounded continuous function \(\psi \colon K \times \Omega \to \bR\), and that \(T > 0\) is such that \(\psi (k, \omega)\) depends on \(\omega\) only through \((\omega (s))_{s \leq T}\). Finally, assume that for every \(k \in K\) the function \(\omega \mapsto \psi (k, \omega)\) is Lipschitz continuous, i.e., there exists a constant \(C = C_k > 0\) such that
	\[
	|\psi (k, \omega) - \psi (k, \alpha)| \leq C \sup_{s \in [0, T]} \|\omega (s) - \alpha (s)\|
	\]
	for all \(\omega, \alpha \in \Omega\).
	Then, for every \(k \in K\), there exists a constant \(L = L (k, T) > 0\) such that 
	\[
	|v (k, t, \omega) - v (k, s, \alpha)| \leq L\hspace{0.025cm} \dd_T ( (t, \omega), (s, \alpha) )
	\]
	for all \((t, \omega), (s, \alpha) \in \of 0, T\gs\).
\end{theorem}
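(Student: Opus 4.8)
\emph{Plan.} The plan is to fix \(k \in K\) and \(T > 0\) once and for all. Since every constant appearing in the statement is allowed to depend on \((k, T)\), freezing \(k\) reduces the claim to a regularity statement for the ordinary (\(k\)-free) value function \((t, \omega) \mapsto v(k, t, \omega)\) associated with the coefficients \(b(k, \cdot, \cdot, \cdot)\), \(\sigma(k, \cdot, \cdot, \cdot)\) and the time-independent reward \(\psi(k, \cdot)\). For this frozen \(k\), Condition~\ref{cond: main1} together with Condition~\ref{cond: glob Lip} supplies exactly the global Lipschitz continuity in \(\omega\) (uniform in \(f\)) and the linear growth needed to run the stability machinery of \cite{CN23a}. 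The two summands of \(\dd_T\) in \eqref{eq: def d} will be matched by two separate estimates --- a spatial Lipschitz bound at equal times and a \(1/2\)-Hölder bound in the time variable --- which are then glued together through the flow (dynamic programming) property of \(v\). Throughout I may assume without loss of generality that \(t \leq s\).

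\emph{Spatial estimate.} First I would fix \(P \in \cA(k, t, \omega)\) and, using the representation of admissible laws as weak solutions of controlled path-dependent SDEs (with a predictable \(F\)-valued control \(f\) and a driving Brownian motion \(W\), available by the convex-valuedness of \(\Theta\) and measurable selection), realise \(X\) as the solution of \(dX_{r+t} = b(k, f_r, r+t, X)\,dr + \sigma(k, f_r, r+t, X)\,dW_r\) started from the frozen path \(\omega\) on \([0, t]\). Driving the \emph{same} control \(f\) and the \emph{same} \(W\) with initial path \(\alpha\) on \([0, t]\) produces, by Condition~\ref{cond: glob Lip} (which guarantees strong existence and uniqueness), a process \(Y\) whose law lies in \(\cA(k, t, \alpha)\). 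A Gronwall argument with the Lipschitz constant \(C = C(k,T)\) then yields
\[
E\Big[\sup_{r \leq T}\|X_r - Y_r\|^2\Big] \leq C\,e^{CT}\sup_{r \leq t}\|\omega(r) - \alpha(r)\|^2 ,
\]
and, since \(\psi(k, \cdot)\) is Lipschitz, \(|E^P[\psi(k, X)] - E[\psi(k, Y)]|\) is bounded by a constant times \(\sup_{r \leq t}\|\omega(r) - \alpha(r)\|\). Taking the supremum over \(P\) and symmetrising gives \(|v(k, t, \omega) - v(k, t, \alpha)| \leq L \sup_{r \leq t}\|\omega(r) - \alpha(r)\|\).

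\emph{Temporal estimate.} From the linear growth Condition~\ref{cond: main1}(iv) I would derive, via the Burkholder--Davis--Gundy inequality and Cauchy--Schwarz on the drift, the a priori moment bound \(E^P[\sup_{r \leq T}\|X_r\|^2] \leq C(1 + \sup_{r \leq t}\|\omega(r)\|^2)\) together with the increment bound
\[
E^P\Big[\sup_{t \leq r \leq s}\|X_r - X_t\|^2\Big]^{1/2} \leq C\Big(1 + \sup_{r \leq t}\|\omega(r)\|\Big)|s - t|^{1/2}
\]
for every \(P \in \cA(k, t, \omega)\). Invoking the flow property \(v(k, t, \omega) = \sup_{P \in \cA(k, t, \omega)} E^P[v(k, s, X)]\) (inherited from the stability of the class \(\cA\) under conditioning established in \cite{CN23a}) and the spatial bound just obtained for \(v(k, s, \cdot)\), I would estimate \(|v(k, s, X) - v(k, s, \alpha)| \leq L \sup_{r \leq s}\|X_r - \alpha(r)\|\) under each \(P\). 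Splitting \([0, s]\) into \([0, t]\) (where \(X = \omega\)) and \([t, s]\), the path difference decomposes exactly as
\[
\sup_{r \leq s}\|X_r - \alpha(r)\| \leq \sup_{t \leq r \leq s}\|X_r - X_t\| + \sup_{r \leq T}\|\omega(r \wedge t) - \alpha(r \wedge s)\| ,
\]
so that taking \(E^P\) and inserting the increment bound reproduces precisely the two terms of \(\dd_T\).

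\emph{Main obstacle.} The step I expect to be the crux is the first one: setting up the synchronous coupling inside the characteristics-based framework, i.e.\ passing from ``a law whose characteristics take values in \(\Theta(k, \cdot, X)\)'' to ``a controlled SDE driven by an explicit Brownian motion and control'', and then verifying that the coupled process \(Y\) genuinely yields an element of \(\cA(k, t, \alpha)\) --- together with confirming the flow property in the form needed for the conditioning time \(s\). These are exactly the points where the path-dependence and the nonlinearity interact, and where one must lean on the measurable-selection and stability results of \cite{CN23a}; once they are in place, the Gronwall and Burkholder--Davis--Gundy estimates are routine, and the two estimates above combine to give \(|v(k, t, \omega) - v(k, s, \alpha)| \leq L\,\dd_T((t, \omega), (s, \alpha))\).
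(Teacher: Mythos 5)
Your proposal is correct and follows essentially the same route as the paper: the paper's proof simply cites inequalities (8.8) and (8.9) of \cite{CN23a} — which encapsulate exactly your synchronous-coupling/Gronwall spatial estimate and your DPP-plus-increment temporal estimate — and then combines them with the Karatzas--Shreve moment bound and Jensen's inequality, just as you do. The points you flag as the crux (measurable selection of the control, the coupled law landing in \(\cA(k,t,\alpha)\) via convexity of \(\Theta\), and the flow property) are precisely what the cited results of \cite{CN23a} supply, so your reconstruction matches the intended argument.
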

Notice that the constant \(L\) from Theorem \ref{thm: lip cont VF} depends on \(k\). This dependence is inherited from the Lipschitz constants of \(b, \sigma\) and \(\psi\). 
Next, we deduce Theorem \ref{thm: stability} from the Theorems~\ref{thm: main1} and \ref{thm: lip cont VF}.


\subsection{Proof of Theorem \ref{thm: stability}} \label{sec: pf stability}
Let \(v^0, v^1, \dots\) be as in Theorem \ref{thm: stability}.
Set \(K := \{1 /n \colon n \in \mathbb{N}\} \cup \{0\}\) and endow it with the Euclidean topology. Then, \(K\) is a compact metrizable space, which shows that Condition \ref{cond: main1} (i) holds. We define
\begin{align*}
	b (1/n, f, t, \omega) &:= b^n (f, t \wedge T, \omega (\cdot \wedge T)), \\
	b (0, f, t, \omega) &:= b^0 (f, t  \wedge T, \omega (\cdot \wedge T)), \\
	\sigma (1/n, f, t, \omega) &:= \sigma^n (f, t \wedge T, \omega(\cdot \wedge T)) , \\
	\sigma (0, f, t, \omega) &:= \sigma^0 (f, t \wedge T, \omega(\cdot \wedge T)), \\
	\psi (1/n, t, \omega) &:= \psi^n (\omega (\cdot \wedge T)), \\
	\psi (0, t, \omega) &:= \psi^0 (\omega (\cdot \wedge T)).
\end{align*} 
It follows from Condition \ref{cond: main2} (ii) and Condition~\ref{cond: main3} (i) that \(b, \sigma\) and \(\psi\) are continuous. Consequently, Condition \ref{cond: main1} (ii) holds. We also note that \(\psi\) is bounded by Condition \ref{cond: main3} (iii). Moreover, Condition~\ref{cond: main1} (iii) is implied by Condition \ref{cond: main2}~(iii); Condition~\ref{cond: main1} (iv) is implied by Condition \ref{cond: main3} (ii); and the Conditions~\ref{cond: main1} (v) and~\ref{cond: glob Lip} follow from Condition~\ref{cond: main2} (v). In summary, the Conditions~\ref{cond: main1} and \ref{cond: glob Lip} are satisfied.
It follows from \cite[Theorem 4.3]{CN23a} and the Theorems~\ref{thm: uni} and \ref{thm: lip cont VF}, that, for \((t, \omega) \in \of 0, T \gs\),
\begin{align*}
	v^n (t, \omega) &= \sup_{P \in \mathcal{C} (1/n, t, \omega)} E^P \big[ \psi (1/n, t, X) \big], \\ v^0 (t, \omega) &= \sup_{P \in \mathcal{C} (0, t, \omega)} E^P \big[ \psi (0, t, X)\big].
\end{align*}
Theorem \ref{thm: main1} yields that for every sequence \((t^n, \omega^n)_{n = 0}^\infty \subset \of 0, T\gs\) such that \((t^n, \omega^n) \to (t^0, \omega^0)\) it holds that 
\[
v^n (t^n, \omega^n) \to v^0 (t^0, \omega^0). 
\]
By \cite[Theorem on pp. 98--99]{remmert}, this implies that \(v^n \to v^0\) uniformly on compact subsets of \(\of 0, T\gs\). The proof is complete. 
\qed

\subsection{Proof of Theorem \ref{thm: main1}} \label{sec: pf regularity}
Our strategy is to use Berge's maximum theorem. A similar idea was used in \cite{CN23a} to establish the continuity of the standard (i.e., \(k\)-independent) value function. To the best of our knowledge, for stochastic frameworks the strategy traces back to the seminal paper~\cite{nicole1987compactification}.

Let us start with another representation of the extended value function that appears to be more convenient. 
For \(\omega, \omega' \in \Omega, t \in \mathbb{R}_+\) and \(k \in K\), we define the concatenation
\[
\omega\ \widetilde{\otimes}_t\ \omega' :=  \omega \1_{[ 0, t)} + (\omega (t) + \omega' (\cdot - t) - \omega' (0)) \1_{[t, \infty)},
\]
and the set
\begin{equation*}
	\begin{split}
		\mathcal{R}(k, t,\omega) := \Big\{ P \in \fPas \colon &P \circ X_0^{-1} = \delta_{\omega (t)},\
		(\llambda \otimes P)\text{-a.e. } \\&(dB^{P} /d\llambda, dC^{P}/d\llambda) \in \Theta (k, \cdot + t, \omega \ \widetilde{\otimes}_t\ X)  \Big\}.
	\end{split}
\end{equation*}
The following representation of the extended value function follows from \cite[Corollary 7.3]{CN23a}.
\begin{lemma} \label{lem: chara EVF}
	For all \((k, t, \omega) \in K \times \of 0, \infty\of\), we have
	\[
	v (k, t, \omega) = \sup_{P \in \cR (k, t, \omega)} E^P \big[ \psi ( k, t, \omega \ \widetilde{\otimes}_t \ X) \big]. 
	\]
\end{lemma}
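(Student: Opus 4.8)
The plan is to reduce the claim to \cite[Corollary 7.3]{CN23a} by observing that the asserted identity is pointwise in \(k\). For a fixed \(k \in K\), the maps \((f, t, \omega) \mapsto b(k, f, t, \omega)\) and \((f, t, \omega) \mapsto \sigma(k, f, t, \omega)\) together with \((t,\omega) \mapsto \psi(k, t, \omega)\) constitute exactly the (\(k\)-free) data of the framework in \cite{CN23a}. First I would check that this frozen-\(k\) data meets the standing hypotheses required there: predictability of the coefficients holds by assumption, continuity and the local Lipschitz estimate follow from Condition~\ref{cond: main1}\,(ii) and (v) read at fixed \(k\) (where the constant \(C = C(k, T)\) is now simply a constant), and \(\psi(k, \cdot, \cdot)\) is bounded and continuous. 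Once the hypotheses are verified, \cite[Corollary 7.3]{CN23a} applies to the data \((b(k,\cdot), \sigma(k, \cdot), \psi(k, \cdot))\) and yields precisely
\[
v(k, t, \omega) = \sup_{P \in \cR(k, t, \omega)} E^P \big[ \psi(k, t, \omega \, \widetilde{\otimes}_t \, X) \big]
\]
for every \((t, \omega) \in \of 0, \infty \of\). Since \(k\) was arbitrary, this is the assertion.

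For the reader's convenience I would also recall the mechanism behind the cited corollary, as it clarifies why the concatenation appears. The underlying statement is a bijection between \(\cC(k, t, \omega)\) and \(\cR(k, t, \omega)\), implemented by the time shift \(P \mapsto P \circ (X_{\cdot + t})^{-1}\). The key pathwise identity is that, on the event \(\{X = \omega \text{ on } [0, t]\}\) (which carries full \(P\)-mass for \(P \in \cC(k, t, \omega)\)), one has \(\omega \, \widetilde{\otimes}_t \, (X_{\cdot + t}) = X\); indeed, both sides agree with \(\omega\) on \([0, t)\), and for \(u \geq t\) the recentering \(\omega(t) + X_{u} - X_{t} = X_u\) holds because \(X_t = \omega(t)\). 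Applying this identity to the terminal reward turns \(E^P[\psi(k, t, X)]\) into \(E^P[\psi(k, t, \omega \, \widetilde{\otimes}_t \, (X_{\cdot + t}))]\), which, after the change of variables \(Q := P \circ (X_{\cdot + t})^{-1}\), becomes \(E^Q[\psi(k, t, \omega \, \widetilde{\otimes}_t \, X)]\). The same identity, combined with the shift-invariance of the (absolutely continuous) semimartingale characteristics, matches the defining constraint \((dB^P_{\cdot + t}/d\llambda, dC^P_{\cdot + t}/d\llambda) \in \Theta(k, \cdot + t, X)\) of \(\cC(k, t, \omega)\) with the constraint \((dB^Q/d\llambda, dC^Q/d\llambda) \in \Theta(k, \cdot + t, \omega \, \widetilde{\otimes}_t \, X)\) defining \(\cR(k, t, \omega)\). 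Thus the two optimization problems have identical feasible sets and objectives under the correspondence \(P \leftrightarrow Q\), and their values coincide.

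I expect the only real point to check is the compatibility of the two descriptions of the admissible laws --- the ``frozen on \([0,t]\)'' formulation underlying \(\cC\) versus the ``start from \(\omega(t)\) and concatenate'' formulation underlying \(\cR\) --- and in particular that absolute continuity of the characteristics and the \(\llambda \otimes P\)-a.e.\ membership in \(\Theta\) are preserved in both directions of the shift. These transformation properties of semimartingale laws and their characteristics under time shifting are exactly what is established in \cite{CN23a}, so at this level the proof amounts to verifying that the fixed-\(k\) data fall under the scope of \cite[Corollary 7.3]{CN23a}; no additional \(k\)-dependent argument is needed here, since continuity in \(k\) is not part of this lemma but is deferred to Theorem~\ref{thm: main1}.
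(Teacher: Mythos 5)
Your proposal is correct and coincides with the paper's own argument: the paper proves Lemma~\ref{lem: chara EVF} precisely by invoking \cite[Corollary 7.3]{CN23a} pointwise in \(k\), which is exactly your reduction, and your verification that the frozen-\(k\) data \((b(k,\cdot), \sigma(k,\cdot), \psi(k,\cdot))\) satisfies the \(k\)-free hypotheses of \cite{CN23a} is the (implicit) content of that citation. Your additional sketch of the time-shift bijection \(P \mapsto P \circ (X_{\cdot + t})^{-1}\) and the pathwise identity \(\omega \ \widetilde{\otimes}_t \ (X_{\cdot + t}) = X\) on \(\{X = \omega \text{ on } [0,t]\}\) is accurate and merely unpacks the cited corollary, so no gap remains.
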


While the correspondence \(\cC\) depends on \(\fPas (t)\) and the \(t\)-dependent characteristics \((dB^{P}_{\cdot + t} /d\llambda,\) \(dC^{P}_{\cdot + t}/d\llambda)\), the main \((k, t, \omega)\)-dependence in \(\cR\) is shifted to the \(\Theta\)-part, which seems to be easier to handle. The following proposition is the key step in the proof of Theorem \ref{thm: main1}. Its proof is postponed to the end of this section.
\begin{theorem} \label{prop: C continuous}
	Assume that Condition \ref{cond: main1} holds. Then, the correspondence \(\cR\) is continuous (\cite[Definition 17.2]{charalambos2013infinite}) with nonempty and compact values. 
\end{theorem}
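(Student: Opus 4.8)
The plan is to exploit that $K \times \of 0, \infty\of$ is first countable (since $K$ is first countable by Condition~\ref{cond: main1}(i) and both $\bR_+$ and $\Omega$ are metrizable) and that $\fPas \subset \mathfrak{P}(\Omega)$ carries the metrizable topology of weak convergence, so that upper and lower hemicontinuity as well as compactness of values can all be checked along sequences. Throughout I fix a convergent sequence $(k^n, t^n, \omega^n) \to (k, t, \omega)$ in $K \times \of 0, \infty\of$ and note that $G := \{k^n : n \in \N\} \cup \{k\}$ is compact, so the linear growth bound of Condition~\ref{cond: main1}(iv) holds with a single constant $C(T,G)$ along the whole sequence.

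Nonemptiness and compactness of the values, together with upper hemicontinuity, I would obtain exactly as in the $k$-independent framework of \cite{CN23a}. For fixed parameters, $\cR(k, t, \omega)$ is the set of laws solving a characteristic inclusion with jointly continuous coefficients of linear growth, so nonemptiness and compactness follow from the corresponding statements in \cite{CN23a} applied to $b(k, \cdot)$ and $\sigma(k, \cdot)$. For upper hemicontinuity, take $P^n \in \cR(k^n, t^n, \omega^n)$: the uniform linear growth bound yields uniform moment estimates and hence tightness of $(P^n)$; along a weakly convergent subsequence $P^n \to P$, stability of semimartingale characteristics under weak convergence identifies the limiting characteristics, and their inclusion in $\Theta(k, \cdot + t, \omega\,\widetilde{\otimes}_t X)$ follows from the closedness of the graph of $\Theta$ (a consequence of the continuity of $b, \sigma$ and the compactness of $F$) together with the convexity in Condition~\ref{cond: main1}(iii). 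Since $\omega^n(t^n) \to \omega(t)$, the initial condition passes to the limit and $P \in \cR(k, t, \omega)$. The $k$-dependence is harmless here because $b$ and $\sigma$ are \emph{jointly} continuous in $(k, f, t, \omega)$.

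The substance of the proof is lower hemicontinuity, for which I would adapt the coupling technique of \cite{jacod1981weak}. Fix $P \in \cR(k, t, \omega)$; the goal is to produce $P^n \in \cR(k^n, t^n, \omega^n)$ with $P^n \to P$ weakly. Since the characteristics of $X$ under $P$ take values in $\Theta(k, \cdot + t, \omega\,\widetilde{\otimes}_t X)$, the image of the compact set $F$ under the continuous map $(b, \sigma\sigma^*)(k, \cdot, \cdot + t, \cdot)$, a measurable selection argument furnishes a predictable $F$-valued control $\gamma = (\gamma_s)_{s \geq 0}$ with $(dB^P/d\llambda, dC^P/d\llambda) = (b, \sigma\sigma^*)(k, \gamma, \cdot + t, \omega\,\widetilde{\otimes}_t X)$, $\llambda \otimes P$-a.e. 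Enlarging the space if necessary to carry a Brownian motion $W$, I would realise $X$ as a solution of the path-dependent SDE with drift $b(k, \gamma_s, s + t, \omega\,\widetilde{\otimes}_t X)$ and dispersion $\sigma(k, \gamma_s, s + t, \omega\,\widetilde{\otimes}_t X)$. The crucial idea is then to keep both the exogenous control $\gamma$ and the driving $W$ fixed, and to solve for each $n$ the analogous SDE with the perturbed data $b(k^n, \gamma_s, s + t^n, \cdot), \sigma(k^n, \gamma_s, s + t^n, \cdot)$ and initial value $\omega^n(t^n)$; a unique strong solution $X^n$ exists by the local Lipschitz bound of Condition~\ref{cond: main1}(v) together with the linear growth bound, which rules out explosion. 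Setting $P^n := \mathrm{Law}(X^n)$, the characteristics of $X^n$ relative to the enlarged filtration are $(b, \sigma\sigma^*)(k^n, \gamma, \cdot + t^n, \cdot) \in \Theta(k^n, \cdot)$; projecting onto the self-generated filtration of $X^n$ replaces them by conditional expectations, which stay in the closed convex set $\Theta(k^n, \cdot)$ precisely because of Condition~\ref{cond: main1}(iii). Hence $P^n \in \cR(k^n, t^n, \omega^n)$.

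It then remains to prove $X^n \to X$ in $L^2$ locally uniformly, which gives $P^n \to P$ weakly. Subtracting the two integral equations and applying the Burkholder--Davis--Gundy and Gronwall inequalities, the key is to bound $\|b(k^n, \gamma_u, u + t^n, \omega^n\,\widetilde{\otimes}_{t^n} X^n) - b(k, \gamma_u, u + t, \omega\,\widetilde{\otimes}_t X)\|$ (and similarly for $\sigma$). I would split this into a path-Lipschitz term, controlled by the \emph{fixed} limit constant $C(k, T)$ from Condition~\ref{cond: main1}(v) applied to $b(k, \cdot)$, so that Gronwall closes with a constant independent of $n$, and a continuity-in-$(k,t)$ term $\sup_{f \in F} \|b(k^n, f, u + t^n, z) - b(k, f, u + t, z)\|$ evaluated along the random path $z$. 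The latter is the main obstacle: joint continuity and compactness of $F$ force it to $0$ uniformly only on compact sets of paths, so it must be combined with the uniform-in-$n$ moment bounds through a truncation argument to control the contribution on $\{\sup_s \|X^n_s\| > R\}$. Making this estimate rigorous, along with the measurable selection and the It\^o representation on the enlargement, is where the analysis of \cite{jacod1981weak} has to be adapted to the present path-dependent, $k$-parametrised setting; once it is in place, lower hemicontinuity, and hence continuity of $\cR$, follows.
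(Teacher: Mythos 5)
Your Step~1 is exactly the paper's: measurable selection of a predictable control \(\f\) from the characteristics of \(P\) (\cite[Lemma~8.1]{CN23a}), realization of \(X\) as an SDE solution on an enlargement carrying a Brownian motion \(W\), solving the perturbed SDE with the \emph{same} control and the \emph{same} \(W\) (strong solvability from Condition~\ref{cond: main1}(iv)+(v), \cite[Theorem~14.30]{jacod79}), and membership \(P^n \in \cR(k^n,t^n,\omega^n)\) via convexity and projection onto the canonical filtration (\cite[Lemma~8.2]{CN23a}); nonemptiness, compactness and upper hemicontinuity are also handled as in the paper. Where you genuinely diverge is the convergence step. You propose a quantitative BDG--Gronwall estimate giving \(X^n \to X\) locally uniformly, hence convergence of the \emph{full} sequence \(P^n \to P\). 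The paper deliberately avoids this: it passes to a subsequence of \(R^n := Q \circ (\operatorname{id}, X^n)^{-1}\) converging in the Jacod--M\'emin weak-strong sense (\cite[Theorem~2.5]{CPS}), identifies the limit by a martingale-problem argument with Carath\'eodory test functions, and concludes \(R = \delta_X \otimes Q\) from pathwise uniqueness (\cite[Theorems~2.22, 2.25(b), 4.5]{jacod1981weak}); a subsequence suffices for the sequential criterion \cite[Theorem~17.21]{charalambos2013infinite}. The paper's soft route only needs the \((k,t,\omega)\)-continuity error to vanish in \(L^1\) \emph{after} weak convergence of \(P^{N_n}\) is secured (Skorokhod coupling plus Vitali, see \eqref{eq: conv 1}), whereas your route must control it \emph{before} any convergence is known; if completed, yours is more elementary and yields a stronger conclusion.

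The one step in your plan that fails as written is precisely there: you propose to control \(\sup_{f \in F}\|b(k^n,f,u+t^n,\omega^n\,\widetilde{\otimes}_{t^n}\,z) - b(k^0,f,u+t^0,\omega^0\,\widetilde{\otimes}_{t^0}\,z)\|\) along \(z = X^n\) by truncating on \(\{\sup_s \|X^n_s\| > R\}\). But sup-norm balls in \(\Omega\) are \emph{not} compact, and, as you yourself observe, joint continuity of \(b\) and compactness of \(F\) give uniform convergence to \(0\) only on compact path sets; on a bounded ball the supremum need not vanish, so the radius truncation does not close the Gronwall loop. The repair uses ingredients already present: the uniform moment bounds (the paper's \eqref{eq: poly moments}, with the growth constant of Condition~\ref{cond: main1}(iv) taken uniform over the compact set \(\{k^n\colon n\in\N\}\cup\{k^0\}\)), applied to increments via BDG, give tightness of \((Q \circ (X^n)^{-1})_n\) (as in \cite[Lemma~7.4]{CN23a}), so by Prokhorov you may choose compact \(K_\epsilon \subset \Omega\) with \(\sup_n Q(X^n \notin K_\epsilon) \leq \epsilon\) and truncate on \(K_\epsilon\) instead, handling the complement by Cauchy--Schwarz, linear growth and the moment bounds. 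Two smaller repairs: the Lipschitz bound of Condition~\ref{cond: main1}(v) at \(k^0\) is only local, so Gronwall must be run up to stopping times \(\tau^n_R := \inf\{s \colon \|X^n_s\| \vee \|X_s\| \geq R\}\) and de-localized afterwards via the uniform moments, which yields locally uniform convergence in probability rather than in \(L^2\) -- enough for \(P^n \to P\); and the Lipschitz term compares concatenated paths, so it additionally produces the (harmless) initial-value discrepancy \(\|\omega^n(t^n) - \omega^0(t^0)\|\). With these modifications your argument is sound and even upgrades the paper's subsequential construction to convergence of the whole sequence.
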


For reader's convenience, we recall a version of  {\em Berge's maximum theorem}, see \cite[Theorem~17.31]{charalambos2013infinite}.

\begin{theorem} [Berge's Maximum Theorem] \label{theo: berge}
	Let \(G\) be a first countable topological space, let \(H\) be a metrizable space and let \(\Xi \colon G \to H\) be a continuous correspondence with nonempty and compact values. For every continuous function \(g \colon \on{gr} \Xi \to \mathbb{R}\), the map
	\[
	G \ni x \mapsto \max_{y \in \Xi (x)} g (x, y) \in \mathbb{R}
	\]
	is continuous. 
\end{theorem}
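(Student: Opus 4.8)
The plan is to prove that the value map $m(x) := \max_{y \in \Xi(x)} g(x,y)$ is both upper and lower semicontinuous, which together give continuity. First I would record that $m$ is well-defined: since $\Xi(x)$ is nonempty and compact and $g(x,\cdot)$ is the restriction of the continuous function $g$ to the compact slice $\{x\} \times \Xi(x) \subseteq \on{gr}\Xi$, the maximum is attained. Because $G$ is first countable, it suffices to check both semicontinuity properties along sequences. So I would fix $x \in G$ and an arbitrary sequence $x_n \to x$, and establish $\limsup_n m(x_n) \le m(x)$ (upper semicontinuity) together with $\liminf_n m(x_n) \ge m(x)$ (lower semicontinuity).

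For the upper estimate, for each $n$ I choose a maximizer $y_n \in \Xi(x_n)$ with $g(x_n, y_n) = m(x_n)$. Passing first to a subsequence along which $m(x_n) \to \limsup_n m(x_n)$, I extract a further subsequence of $(y_n)$ converging to a point of $\Xi(x)$. This extraction is the crux: using upper hemicontinuity together with compactness of $\Xi(x)$ and metrizability of $H$, one argues that every open neighbourhood of the compact set $\Xi(x)$ eventually contains $\Xi(x_n) \ni y_n$, so that $d(y_n, \Xi(x)) \to 0$, and then compactness of $\Xi(x)$ furnishes a subsequence $y_{n_k} \to z \in \Xi(x)$. Since $(x_{n_k}, y_{n_k}) \to (x, z) \in \on{gr}\Xi$ and $g$ is continuous, $m(x_{n_k}) = g(x_{n_k}, y_{n_k}) \to g(x, z) \le m(x)$, whence $\limsup_n m(x_n) \le m(x)$.

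For the lower estimate, let $y^\ast \in \Xi(x)$ attain $m(x) = g(x, y^\ast)$. Here I invoke the sequential form of lower hemicontinuity that is available because $G$ is first countable: for the given sequence $x_n \to x$ and the point $y^\ast \in \Xi(x)$ there exist $y_n \in \Xi(x_n)$ with $y_n \to y^\ast$, obtained by matching a countable decreasing neighbourhood base of $y^\ast$ against the neighbourhoods of $x$ supplied by lower hemicontinuity. Then $(x_n, y_n) \to (x, y^\ast) \in \on{gr}\Xi$, so by continuity of $g$ we get $m(x_n) \ge g(x_n, y_n) \to g(x, y^\ast) = m(x)$, i.e. $\liminf_n m(x_n) \ge m(x)$. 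Combining the two estimates gives $m(x_n) \to m(x)$; as $x$ and the sequence $x_n \to x$ were arbitrary and $G$ is first countable, $m$ is continuous.

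I expect the main obstacle to be the selection/compactness argument in the upper-semicontinuity step, namely converting the topological definition of upper hemicontinuity (in terms of preimages of open neighbourhoods of $\Xi(x)$) into the sequential statement that the maximizers $y_n \in \Xi(x_n)$ cluster inside $\Xi(x)$. This is precisely where compact-valuedness of $\Xi$ and metrizability of $H$ enter; without them the maximizers need not subconverge, and the argument would fail.
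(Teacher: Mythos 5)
Your proposal is correct, but note that the paper does not prove this statement at all: it is recalled as a known result from Aliprantis--Border \cite[Theorem~17.31]{charalambos2013infinite}, so there is no internal proof to compare against; what you have written is essentially a self-contained reconstruction of the classical argument, made elementary by exactly the two extra hypotheses this version carries. First countability of \(G\) reduces continuity of \(m(x) := \max_{y \in \Xi(x)} g(x,y)\) to sequential continuity, and metrizability of \(H\) powers both crux steps, which check out as you sketch them. For the upper estimate: \(U_\epsilon := \{ y \in H \colon d(y, \Xi(x)) < \epsilon \}\) is open and contains \(\Xi(x)\), so upper hemicontinuity gives \(\Xi(x_n) \subseteq U_\epsilon\) eventually, hence \(d(y_n, \Xi(x)) \to 0\); choosing nearest points \(z_n \in \Xi(x)\) and using sequential compactness of the compact metrizable set \(\Xi(x)\) yields \(y_{n_k} \to z \in \Xi(x)\), and continuity of \(g\) on \(\operatorname{gr} \Xi\) (in the subspace topology, which is all that is assumed) finishes the step, in passing ruling out \(\limsup_n m(x_n) = +\infty\). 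For the lower estimate, one attribution should be stated precisely: the selection of \(y_n \in \Xi(x_n)\) with \(y_n \to y^*\) along the \emph{full} sequence requires a countable neighbourhood base at \(y^*\), i.e.\ metrizability (or first countability) of \(H\), not first countability of \(G\); your diagonal construction along the balls \(B(y^*, 1/m)\) does deliver it, and it is genuinely stronger than the subsequence characterization of lower hemicontinuity in \cite[Theorem~17.21]{charalambos2013infinite}, which is the form the paper itself invokes elsewhere. By contrast, the proof in \cite{charalambos2013infinite} treats arbitrary topological spaces via nets and separate lemmas for each semicontinuity half; your hypotheses buy an elementary sequence proof, which suffices for the paper's application, where \(G\) is a product of first countable spaces and \(H = \mathfrak{P}(\Omega)\) is Polish.
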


By virtue of Lemma \ref{lem: chara EVF}, the continuity of the extended value function  follows from Theorem \ref{theo: berge} applied with the following input:
\begin{enumerate}
	\item[-] \(G \equiv K \times \of 0, \infty\of\), which is first countable by part (i) of Condition \ref{cond: main1} and the fact that products of first countable spaces remain first countable;
	\item[-] \(H \equiv \mathfrak{P}(\Omega)\), which is well-known to be a Polish space;
	\item[-] \(\Xi \equiv \cR\), which is a continuous correspondence with nonempty and compact values by Theorem~\ref{prop: C continuous};
	\item[-] \(g (k, t, \omega, P) \equiv E^P [ \psi ( k, t,  \omega \ \widetilde{\otimes}_t \ X) ]\), which is continuous thanks to \cite[Theorem~8.10.61]{bogachev} and \cite[Corollary~7.7]{CN23a}.
\end{enumerate}
The proof of Theorem \ref{thm: main1} is complete. \qed
\vspace{0.25cm}

In the remainder of this section, we prove Theorem \ref{prop: C continuous}. 

\begin{proof} [Proof of Theorem \ref{prop: C continuous}]
	The correspondence \(\cR\) has nonempty and compact values by \cite[Lemma~2.10, Theorem~4.4]{CN23a}. Furthermore, upper hemicontinuity follows precisely as in the \(k\)-independent case that is given by \cite[Theorem~4.4]{CN23a}. We omit a detailed proof for brevity. 
	
	It is left to prove that \(\cR\) is lower hemicontinuous. 
	We use \cite[Theorem~17.21]{charalambos2013infinite}, i.e., we prove that for every sequence \((k^n, t^n, \omega^n)_{n = 0}^\infty \subset K \times \of 0, \infty\of\) such that \((k^n, t^n, \omega^n) \to (k^ 0, t^0, \omega^0)\) and every \(P \in \cR (k^0, t^0, \omega^0)\), there exists a subsequence \((k^{N_n}, t^{N_n}, \omega^{N_n})_{n = 1}^\infty\) of \((k^n, t^n, \omega^n)_{n = 1}^\infty\) and a sequence \(P^{N_n} \in \cR (k^{N_n}, t^{N_n}, \omega^{N_n})\) such that \(P^{N_n} \to P\) weakly. We construct the approximation sequence in the spirit of the proof for \cite[Theorem 4.7]{CN23a}.
	However, as the local Lipschitz assumption from part (v) of Condition \ref{cond: main1} is only pointwise in \(k\), we have to use more sophisticated tools to establish weak convergence. Namely, we rely on the concept of weak-strong convergence that we learned from the paper \cite{jacod1981weak}. 
	
	Take a sequence \((k^n, t^n, \omega^n)_{n = 0}^\infty \subset K \times \of 0, \infty\of\) such that \((k^n, t^n, \omega^n) \to (k^0, t^0, \omega^0)\) and a measure \(P \in \cR (k^0, t^0, \omega^0)\).
	
	{\em Step 1: A candidate for an approximation sequence.}
	By \cite[Lemma 8.1]{CN23a}, there exists a predictable map \(\f = \f (P) \colon \of 0, \infty\of \to F\) such that \((\llambda \otimes P)\)-a.e.
	\[
	(dB^P / d \llambda, d C^P / d \llambda) = (b , \sigma \sigma^*) (k^0, \f , \cdot + t^0, \omega^0 \ \widetilde{\otimes}_{t^0} \ X).
	\]
	Take a filtered probability space \((\Omega^*, \cF^*, (\cF^*_t)_{t \geq 0}, P^*)\) that supports an \(r\)-dimensional standard Brownian motion \(W^*\). We set 
	\[
	\Sigma := \Omega \times \Omega^*, \ \ \mathcal{A}:= \cF\otimes \cF^*, \ \ \mathcal{A}_t := \bigcap_{s > t} \big( \cF_s \otimes \cF_s^* \big), \ \ Q := P \otimes P^*.
	\]
	Furthermore, we extend \(X\) and \(W\) to the product space \(\Sigma\) by 
	\[
	X (\omega, \omega^*) := X(\omega), \quad W^*(\omega, \omega^*) := W^* (\omega^*).
	\]
	Then, on the stochastic basis \(\mathbb{B} := (\Sigma, \mathcal{A}, (\mathcal{A}_t)_{t \geq 0}, Q)\), \(W^*\) is a standard Brownian motion and \(X\) is a semimartingale whose characteristics are absolutely continuous with densities 
	\[(b ,\sigma \sigma^*) (k^0, \f (\cdot, X) , \cdot + t^0, \omega^0 \ \widetilde{\otimes}_{t^0} \ X).\] By a standard representation theorem (see \cite[Theorem~7.1', p. 90]{IW} or \cite[Theorem ~9.13]{Kallenberg}), there exists a standard \(r\)-dimensional Brownian motion \(W\) on the stochastic basis \(\mathbb{B}\) such that \(Q\)-a.s.
	\begin{align*}
		d X_t &= b (k^0, \f (t, X), t + t^0, \omega^0 \ \widetilde{\otimes}_{t^0} \ X) dt \\&\hspace{3cm}+ \sigma (k^0, \f (t, X), t + t^0, \omega^0 \ \widetilde{\otimes}_{t^0} \ X) d W_t, 
		\\ 
		X_0 &= \omega^0 (t^0).
	\end{align*}
	Using (iv) and (v) from Condition \ref{cond: main1}, we deduce from \cite[Theorem 14.30]{jacod79} (or \cite[Theorem~4.5]{jacod1981weak}) that, for every \(n \in \mathbb{N}\), there exists a continuous adapted process \(X^n\) on the stochastic basis \(\mathbb{B}\) such that \(Q\)-a.s.
	\begin{align*}
		d X^n_t &= b (k^n, \f (t, X), t + t^n, \omega^n \ \widetilde{\otimes}_{t^n}\ X^n) d t 
		\\&\hspace{3cm} + \sigma(k^n, \f (t, X), t + t^n, \omega^n \ \widetilde{\otimes}_{t^n}\ X^n) d W_t, 
		\\ 
		X^n_0 &= \omega^n (t^n).
	\end{align*}
	Notice that \(X\) and \(X^n\) are driven by the very same Brownian motion \(W\).
	By \cite[Lemma~8.2]{CN23a}, the push-forward measure \(P^n := Q \circ (X^n)^{-1}\) is an element of \(\cR (k^n, t^n, \omega^n)\). Any convergent subsequence of \((P^n)_{n = 1}^\infty\) appears to be a candidate for an approximation sequence of \(P\). 
	
	To motivate the next step, we draw the reader's attention to the fact that the coefficients of the SDE for \(X^n\) are random in the sense that they not only depend on the paths of \(X^n\) but also on the paths of \(X\). In particular, as we have no further information on \(\f\) rather than its predictability, there is no continuity of the coefficients in the \(X\)-variable. This lack of regularity prevents us to use standard arguments based on weak convergence (convergence in distribution) and the usual continuous mapping theorem. Instead, we pass to a stronger type of convergence, the so-called {\em weak-strong convergence}. This type of convergence has a powerful continuous mapping theorem for Carath\'eodory functions, which is tailor made for our needs.
	
	{\em Step 2: Selecting a suitable subsequence.} 
	As explained above, to select a suitable convergent subsequence from \((P^n)_{n = 1}^\infty\), we use the concept of weak-strong convergence that we shortly recall in the following. Set \((U, \mathcal{U}) := (\Sigma \times \Omega, \mathcal{A} \otimes \mathcal{B}(\Omega))\). We say that a sequence \((R^n)_{n = 1}^\infty\) of probability measures on \((U, \mathcal{U})\) converges in the weak-strong sense to a probability measure \(R\) on \((U, \mathcal{U})\) if 
	\[
	E^{R^n} \big[ f \big] \to E^R \big[ f \big]
	\]
	for all bounded Carath\'eodory functions \(f \colon U \to \mathbb{R}\) that are measurable in the first and continuous in the second variable. 
	Define \(R^n := Q \circ (\on{id}, X^n)^{-1}\), which is a sequence of probability measures on \((U, \mathcal{U})\). Similar to the proof of \cite[Lemma 7.4]{CN23a}, part (iv) from Condition \ref{cond: main1} shows that the set \(\{P^n \colon n \in \mathbb{N}\}\) is relatively compact in \(\mathfrak{P}(\Omega)\). The sequence of \(\Sigma\)-marginals of \((R^n)_{n = 1}^\infty\) is given by the constant sequence \((Q)_{n = 1}^\infty\) that is trivially sequentially relatively compact in the space of probability measures on \((\Sigma, \mathcal{A})\) endowed with the topology of set-wise convergence. 
	Thus, by \cite[Theorem 2.5]{CPS}, there exists a subsequence \((R^{N_n})_{n = 1}^\infty \subset (R^n)_{n = 1}^\infty\) that converges in the weak-strong sense to a probability measure \(R\). The sequence \((P^{N_n})_{n = 1}^\infty\) is our candidate for an approximating sequence of the measure~\(P\). 
	
	{\em Step 3: \(P^{N_n} \to P\) weakly.} We use a martingale problem argument to identify the measure \(R\). 
	Define (extended) coefficients \(\ob \colon K \times F \times \of 0, \infty\of \hspace{0.05cm}\to \bR^{r + d}\) and \(\osigma \colon K \times F \times \of 0, \infty\of \hspace{0.05cm}\to \bR^{(r + d) \times r}\) by 
	\[
	\ob := \binom{0}{b}, \quad \osigma := \binom{\on{id}}{\sigma}.
	\] 
	These extensions allow us to take the driving Brownian motion into account.
	Let \(C^2_b(\mathbb{R}^{r + d}; \bR)\) be the space of all bounded twice continuously differentiable functions from \(\bR^{r + d}\) into \(\bR\) with bounded gradient and bounded Hessian matrix. Fix a test function \(g \in C^2_b (\bR^{r + d}; \bR)\) and two deterministic times \(0 \leq S < T < \infty\). Furthermore, take \(G \in \mathcal{A}\) and let \(h \colon \Omega \to \mathbb{R}\) be a bounded continuous function such that \(h (\omega)\) depends on \(\omega\) only via \((\omega (t))_{t \leq S}\).
	For \(n \in \mathbb{N}\) and \((\alpha, \gamma) \in U\), we set 
	\begin{align*}
		g^n_s& (\alpha, \gamma) \\
		&:= \langle \nabla g (W_s (\alpha), \gamma (s)), \ob (k^n, \f (s, X(\alpha)), s + t^n, \omega^n \ \widetilde{\otimes}_{t^n} \ \gamma) \rangle \\& \hspace{1cm}+ \tfrac{1}{2} \on {tr} \big[ \nabla^2 g(W_s (\alpha), \gamma(s))\ \osigma \osigma^* (k^n, \f (s, X(\alpha)), s + t^n, \omega^n \ \widetilde{\otimes}_{t^n} \ \gamma) \big],
	\end{align*}
	and
	\[
	M^n_t (\gamma) (\alpha) := g (W_t (\alpha), \gamma(t)) - \int_0^t  g^n_s (\alpha, \gamma) ds, \quad t \in \bR_+.
	\]
	By It\^o's formula, 
	\begin{align} \label{eq: ito formula}
		d M^n_t (X^n) = \langle \nabla g (W_t, X^n_t), \osigma (k^n, \f (t, X), t + t^n, \omega^n \ \widetilde{\otimes}_{t^n} \ X^n) d W_t \rangle.
	\end{align}
	Thus, each \(M^n (X^n)\) is a local martingale on the stochastic basis \(\mathbb{B}\). 
	Using (iv) from Condition~\ref{cond: main1} and a standard Gronwall argument as in the solution to \cite[Problem~3.3.15]{KaraShre}, we obtain that 
	\begin{align} \label{eq: poly moments}
		\sup_{n \in \mathbb{Z}_+} E^Q \Big[ \sup_{s \in [0, t]} \|X^n_s\|^p \Big] < \infty, \quad t > 0, \ p \geq 2.
	\end{align}
	It follows from \eqref{eq: ito formula} and Condition \ref{cond: main1} (iv) that \(Q\)-a.s. for all \(t \in \mathbb{R}_+\)
	\begin{align*}
		[ M (X^n)&, M (X^n) ]_t \\&= \int_0^t  \| \osigma^* (k^n, \f (s, X), s + t^n, \omega^n \ \widetilde{\otimes}_{t^n} \ X^n) \nabla g (W_s, X^n_s)\|^2 ds 
		\\&\leq C \Big( 1 + \sup_{s \in [0, t]} \|X^n_s\|^2 \Big),
	\end{align*}
	where \([\ \cdot \ , \ \cdot\ ]\) denotes the quadratic variation process. Hence, with \eqref{eq: poly moments}, we get
	\[E^Q\big[ [M (X^n), M(X^n) ]_t \big] < \infty,\quad t \in \mathbb{R}_+,\] 
	which implies that \(M (X^n)\) is a true martingale on \(\mathbb{B}\). 
	Consequently, 
	\begin{align} \label{eq: MP approx}
		E^{Q} \big[ (M^n_T (X^n) - M^n_S (X^n)) \1_G h (X^n) \big] = 0.
	\end{align}
	Observe that 
	\begin{align*}
		\big| &M^n_T (X^n) - M^0_T (X^n) \big| 
		\\&\ \ \leq C \int_0^T \sup_{f \in F} \| b (k^n, f, s + t^n, \omega^n \ \widetilde{\otimes}_{t^n} \ X^n) - b (k^0, f, s + t^0, \omega^0 \ \widetilde{\otimes}_{t^0} \ X^n) \|
		\\&\hspace{2cm}+ \sum_{i, j = 1}^{r + d} \sum_{l = 1}^r \sup_{f \in F}\big| \osigma^{(il)} \osigma^{(jl)} (k^n, f, s + t^n, \omega^n \ \widetilde{\otimes}_{t^n} \ X^n) 
		\\&\hspace{5cm}
		- \osigma^{(il)} \osigma^{(jl)} (k^0, f, s + t^0, \omega^0 \ \widetilde{\otimes}_{t^0} \ X^n) \big| ds
		\\&\ \ =: V_n,
	\end{align*}
	where \(C > 0\) is a constant that only depends on the test function \(g\). 
	Using part (iv) from Condition~\ref{cond: main1} and \eqref{eq: poly moments}, we obtain that 
	\begin{align} \label{eq: N moment bound}
		\sup_{n \in \mathbb{N}} E^Q \big[ V^2_n \big] < \infty.
	\end{align}
	By Condition \ref{cond: main1} (i)+(ii), \cite[Corollary 7.7]{CN23a} and Theorem \ref{theo: berge}, for every \(s \in \bR_+\), the map 
	\begin{align*}
		(k, t, \omega, \alpha) \mapsto  \sup_{f \in F} \| b (k&, f, s + t, \omega \ \widetilde{\otimes}_t\ \alpha ) - b (k^0, f, s+ t^0, \omega^0 \ \widetilde{\otimes}_{t^0} \ \alpha)\| 
		\\&+ \sum_{i, j = 1}^{r + d} \sum_{l = 1}^r \sup_{f \in F} \big| \osigma^{(il)}\osigma^{(jl)} (k, f, s + t, \omega \ \widetilde{\otimes}_t\ \alpha) 
		\\&\hspace{2.5cm}- \osigma^{(il)}\osigma^{(jl)} (k^0, f, s + t^0, \omega^0 \ \widetilde{\otimes}_{t^0} \ \alpha) \big|
	\end{align*}
	is continuous. The weak-strong convergence of \((R^{N_n})_{n = 1}^\infty\) clearly implies weak convergence of the sequence \((P^{N_n})_{n = 1}^\infty\).
	Thus, by Skorokhod's coupling theorem, we may realize the sequence \((X^{N_n})_{n = 1}^\infty\) on a common probability space such that it converges almost surely. Using this realization, we get that a.s. \(V_{N_n} \to 0\) and, as the set \(\{V_n \colon n \in \mathbb{N}\}\) is uniformly integrable by \eqref{eq: N moment bound}, Vitali's theorem implies that \(E^Q[V_{N_n}] \to 0\).
	We conclude that
	\begin{align} \label{eq: conv 1}
		E^Q \big[ \big| M^{N_n}_T (X^{N_n}) - M^0_T (X^{N_n}) \big| \big] \to 0. 
	\end{align}
	Of course, the same statement holds true when the time \(T\) is replaced by \(S\).
	For \((\alpha, \gamma) \in U\), define
	\begin{align*}
		D^0 &(\alpha, \gamma) \\&:= \Big(g (W_T (\alpha), \gamma (T)) - g (W_S (\alpha), \gamma (S)) - \int_S^T g^0_s (\alpha, \gamma) ds\Big) \1_G (\alpha) h (\gamma).
	\end{align*}
	For every \(\alpha \in \Sigma\), the map \(\gamma \mapsto D (\alpha, \gamma)\) is continuous by Condition~\ref{cond: main1}~(ii), \cite[Corollary 7.7]{CN23a} and the continuity of \(h\) and \(g\). Hence, using again \eqref{eq: poly moments} and Condition~\ref{cond: main1}~(iv) for uniform integrability, we deduce from the Jacod--M\'emin continuous mapping theorem for weak-strong convergence (\cite[Theorem~2.9]{CPS}) that 
	\begin{align*}
		E^R \big[ D^0 \big] &= \lim_{n \to \infty} E^{R^{N_n}} \big[ D^0 \big] 
		\\&= \lim_{n \to \infty} E^Q \big[ (M^0_T (X^{N_n}) - M^0_S (X^{N_n})) \1_G  h (X^{N_n}) \big].
	\end{align*}
	Finally, using \eqref{eq: MP approx} and \eqref{eq: conv 1}, we conclude that
	\begin{align*}
		\big| E^Q \big[ (&M^0_T (X^{N_n}) - M^0_S (X^{N_n})) \1_G h (X^{N_n}) \big] \big| 
		\\& = \big|E^Q \big[ (M^0_T (X^{N_n}) - M^{N_n}_T (X^{N_n}) 
		\\&\hspace{3cm}+ M^{N_n}_S (X^{N_n}) - M^0_S (X^{N_n})) \1_G h (X^{N_n}) \big]\big|
		\\&\leq C \big( E^Q \big[ \big| M^0_T (X^{N_n}) - M^{N_n}_T (X^{N_n}) \big| \big] \\&\hspace{3cm}+ E^Q \big[ \big| M^0_S (X^{N_n}) - M^{N_n}_S (X^{N_n}) \big| \big]  \big)
		\\&\xrightarrow{\quad} 0.
	\end{align*}
	This proves that \(E^R[ D^0 ] = 0\). By a monotone class argument, we conclude from \cite[Theorem~2.10]{jacod1981weak} that \(R\) is a good solution measure (see \cite[Definition~1.7]{jacod1981weak}) to the~SDE
	\begin{align*}
		d Y_t &= b (k^0, \f (t, X), t + t^0, \omega^0 \ \widetilde{\otimes}_{t^0} \ Y) dt 
		\\&\hspace{3cm}+ \sigma (k^0, \f (t, X), t + t^0, \omega^0 \ \widetilde{\otimes}_{t^0} \ Y) d W_t, 
		\\
		Y_0 &= \omega^0 (t^0).
	\end{align*}
	Thanks to the local Lipschitz condition given by Condition \ref{cond: main1} (v), we deduce from \cite[Theorems~2.22,~2.25(b),~4.5]{jacod1981weak} that 
	\[
	R (d \sigma, d \omega) = \delta_{X (\sigma)} (d \omega) Q (d \sigma).
	\]
	Consequently, we get
	\[
	R (\Sigma \times d \omega) = Q (X \in d \omega) = (P \otimes P^*) (d \omega \times \Omega^*) = P (d \omega), 
	\]
	which implies that weakly
	\[
	P^{N_n} (d \omega) = R^{N_n} (\Sigma \times d \omega) \to R (\Sigma \times d \omega) = P (d \omega).
	\]
	This shows lower hemicontinuity of \(\cR\). The proof of Theorem~\ref{prop: C continuous} is complete.
\end{proof}

\subsection{Proof of Theorem \ref{thm: lip cont VF}} \label{sec: pf Lip cont}
Let \(T > 0\) be as in the hypothesis of the theorem, fix \(k \in K\) and take \((t^0, \omega^0), (s^0, \alpha^0) \in \of 0, T\gs\). W.l.o.g., assume that \(s^0 \leq t^0\). In the following \(L = L (k, T) > 0\) denotes a generic constant (only depending on \(k\) and~\(T\)) that might change from line to line.
It follows from the inequalities (8.8) and (8.9) in \cite{CN23a} that 
\begin{align*}
	|v (k, s^0, \omega^0) - v (k, s^0, \alpha^0)| &\leq L \hspace{0.05cm} \sup_{s \in [0, s^0]} \| \omega^0 (s) - \alpha^0 (s) \|, \\
	| v (k, s^0, \omega^0) - v (k, t^0, \omega^0) | &\leq \sup_{P \in \cA (k, s^0, \omega^0)} E^P \Big[ \sup_{s \in [s^0, t^0]} \| X_s - \omega^0 (s^0) \| \Big] 
	\\&\hspace{3cm}+ \sup_{s \in [s^0, t^0]} \| \omega^0 (s) - \omega^0 (s^0) \|.
\end{align*}
We emphasis that these inequalities were obtained without the boundedness assumptions from \cite[Theorem 4.12]{CN23a}.
Using the linear growth assumptions from Condition \ref{cond: main1}, it follows as in the solution to \cite[Problem 3.3.15]{KaraShre} that
\[
\sup_{P \in \cA (k, s^0, \omega^0)} E^P \Big[ \sup_{s \in [s^0, t^0]} \| X_s - \omega^0 (s^0) \|^2 \Big] \leq L \Big( 1 + \sup_{s \in [0, s^0]} \|\omega^0 (s)\|^2 \Big) |t^0 - s^0|.
\]
By Jensen's inequality, we conclude that 
\begin{align*}
	|v (k, t^0, \omega^0) - v (k, s^0, \alpha^0) | &\leq |v (k, t^0, \omega^0) - v (k, s^0, \omega^0)| 
	\\&\hspace{2cm}+ | v (k, s^0, \omega^0) - v (k, s^0, \alpha^0)| 
	\\&\leq L \Big[ \Big( 1 + \sup_{s \in [0, s^0]} \|\omega^0 (s)\| \Big) | t^0 - s^0|^{1/2}  \\&\hspace{2cm}+ \sup_{s \in [0, t^0]} \| \omega^0 (s) - \alpha^0 (s \wedge s^0)\| \Big]
	\\&\leq L \dd_T ( (t^0, \omega^0), (s^0, \alpha^0)).
\end{align*}
This completes the proof. \qed

\appendix 

\section{The set of test functions} \label{app: test}
The following definitions are adapted from \cite{cosso, cosso2}, see \cite[Appendix~A]{cosso2}.
Let \(T >0\) and \(t_0 \in [0,T)\).
Further, let \(D([0, T]; \bR^\d)\) be the space of \cadlag functions from \([0, T]\) into \(\mathbb{R}^\d\).
We define \(\Lambda(t_0) := [t_0, T] \times D([0, T]; \bR^\d)\) and, on \([0, T] \times D([0, T]; \bR^\d)\), we define the pseudometric \(d^*\) as 
\begin{align*}
	d^* ( (t,\omega), (s, \omega')) := | t - s | + \sup_{r \in [0, T]} \|\omega(r \wedge t) - \omega'( r \wedge s) \|.
\end{align*}
We denote the restriction of \(d^*\) to \(\Lambda(t_0)\) again by \(d^*\).
For a map \(F \colon \Lambda(t_0) \to \bR\) we say that \(F\) admits a \emph{horizontal derivative} at \((t, \omega) \in \Lambda(t_0)\) with \(t < T\) if
\[
\p F (t, \omega) := \lim_{h \searrow 0} \frac{ F(t + h, \omega(\cdot \wedge t)) - F(t, \omega(\cdot \wedge t))}{h}
\]
exists.
At \(t = T\), the horizontal derivative is defined as 
\[
\p F(T, \omega) := \lim_{h \nearrow T} \p F(h,\omega).
\]
Further, we say that 
\(F\) admits a \emph{vertical derivative} at \((t, \omega) \in \Lambda(t_0)\) if 
\[
\partial_i F (t, \omega) := \lim_{h \to 0} \frac{F (t, \omega + h e_i\1_{[t,T]}) - F (t, \omega)}{h}, \quad i = 1, 2, \dots, \d,
\]
exist, where \(e_1, \dots, e_\d\) are the standard unit vectors in \(\bR^\d\).
Accordingly, the second vertical derivatives \(\partial^2_{ij} F(t,\omega), i, j = 1, \dots, \d,\) at \((t, \omega) \in \Lambda(t_0)\) are defined as
\[
\partial^2_{ij} F(t,\omega) := \partial_i (\partial_j F)(t, \omega).
\]
We write \(\nabla F := (\partial_1 F, \dots, \partial_\d F)\) for the {\em vertical gradient} and \[\nabla^2 F := (\partial_{ij}^2 F)_{i, j = 1, \dots, \d}\] for the {\em vertical Hessian matrix}.

We denote by \(C^{1,2}(\Lambda(t_0); \bR)\) the set of functions \(F \colon \Lambda(t_0) \to \bR\), continuous with respect to~\(d^*\), 
such that
\[
\p F, \nabla F, \nabla^2 F
\]
exist everywhere on \(\Lambda(t_0)\) and are continuous with respect to \(d^*\).

The set 
\(C^{1,2}( [ t_0, T ] \times C([0, T]; \bR^d) ; \bR)\) consists of functions 
\(F \colon [ t_0, T ] \times C([0, T]; \bR^d)  \to \bR \)  
such that there exists \(\hat{F} \in C^{1,2}(\Lambda(t_0); \bR)\) with
\[
F(t, \omega) = \hat{F}(t,\omega), \quad (t, \omega) \in [ t_0, T ] \times C([0, T]; \bR^d) .
\]
In this case, we define, for \((t, \omega) \in [ t_0, T ] \times C([0, T]; \bR^d) \),
\[
\p F(t, \omega) := \p \hat{F}(t, \omega), \quad
\nabla F(t,\omega) := \nabla \hat{F}(t,\omega), \quad
\nabla^2 F(t,\omega) := \nabla^2 \hat{F}(t,\omega).
\]
By \cite[Lemma 2.1]{cosso}, \( \cD_t F, \nabla F, \nabla^2 F \) are well-defined for \(F \in C^{1,2}([ t_0, T ] \times C([0, T]; \bR^d) ; \bR)\).

Finally, the set \(C_{pol}^{1,2}( [ t_0, T ] \times C([0, T]; \bR^d) ; \bR) \) consists of all functions \(F\) from \(C^{1,2}( [ t_0, T ] \times C([0, T]; \bR^d) ; \bR)\) such that there exist constants \(C, q \in \bR_+\) with
\[
| \p F(t, \omega) | + \| \nabla F(t,\omega) \| + \on{tr} \big[ \nabla^2 F(t,\omega) \big] \leq C \Big( 1 + \sup_{r \in [t_0,T]} \| \omega( r \wedge t ) \|^q\Big)
\]
for all \((t, \omega) \in [ t_0, T ] \times C([0, T]; \bR^d) \).

\end{document}